\documentclass{llncs}
\usepackage[english]{babel}
\usepackage[utf8]{inputenc}
\usepackage{amsmath,amssymb,amsfonts,graphics,graphicx,enumerate}
\usepackage{tabularx} 
\usepackage{diagbox} 
\usepackage{rotating}
\usepackage{microtype}

\newcommand{\fvs}{\textup{fvs}}
\newcommand{\cfvs}{\textup{cfvs}}
\newcommand{\vc}{\textup{vc}}
\newcommand{\cvc}{\textup{cvc}}
\newcommand{\ds}{\textup{ds}}
\newcommand{\cds}{\textup{cds}}

\newtheorem{observation}{Observation} 

\pagestyle{plain}

\begin{document}

\title{
The Price of Connectivity for\\ Feedback Vertex Set\thanks{The results in this paper appeared in preliminary form in the proceedings of EuroComb 2013~\cite{BHKP13} and MFCS 2014~\cite{BHKP14}.}}

\author{ 
R\'{e}my Belmonte\inst{1}$^,$\thanks{Supported by the ELC project (Grant-in-Aid for
Scientific Research on Innovative Areas, MEXT Japan).}
\and
Pim van~'t Hof$^{2,}$\thanks{
Supported by the Research Council of Norway (197548/F20) while working at the Department of Computer Science, University of Bergen, Norway.}
\and\\ 
Marcin Kami\'nski\inst{3,}\thanks{Supported by Foundation for Polish Science (HOMING PLUS/2011-4/8) and 
National Science Center (SONATA 2012/07/D/ST6/02432).
}
\and 
Dani\"el Paulusma\inst{4,}\thanks{
Supported by EPSRC (EP/G043434/1) and Royal Society (JP100692).}
}

\institute{
Department of Architecture and Architectural Engineering, Kyoto University, Japan\\
\texttt{remybelmonte@gmail.com}
\and
School of Built Environment, Rotterdam University of Applied Sciences, The Netherlands\\
\texttt{p.van.t.hof@hr.nl}
\and
Institute of Computer Science, University of Warsaw, Poland\\
\texttt{mjk@mimuw.edu.pl}
\and
School of Engineering and Computing Sciences, Durham University, UK\\
\texttt{daniel.paulusma@durham.ac.uk}
}

\maketitle

\begin{abstract}
Let $\fvs(G)$ and $\cfvs(G)$ denote the cardinalities of a minimum feedback vertex set and a minimum connected feedback vertex set of a~graph $G$, respectively. The price of connectivity for feedback vertex set (poc-fvs) for a class of graphs ${\cal G}$ is defined as the maximum ratio $\cfvs(G)/\fvs(G)$ over all connected graphs $G\in {\cal G}$. We study the poc-fvs for graph classes defined by a finite family ${\cal H}$ of forbidden induced subgraphs. We characterize exactly those finite families ${\cal H}$ for which the poc-fvs for  ${\cal H}$-free graphs is upper bounded by a constant.
Additionally, for the case where $|{\cal H}|=1$, we determine exactly those graphs $H$ for which there exists a constant $c_H$ such that $\cfvs(G)\leq \fvs(G) + c_H$ for every connected $H$-free graph $G$, as well as exactly those graphs $H$ for which we can take $c_H=0$.
\end{abstract}

\section{Introduction}
\label{s-intro}

Numerous important graph parameters are defined as the cardinality of a smallest subset of vertices satisfying a certain property. Well-known examples of such parameters include the cardinality of a minimum vertex cover, a minimum dominating set, or a minimum feedback vertex set in a graph. In many cases requiring the subset of vertices to be {\it connected}, that is, to induce a connected subgraph defines a natural variant of the original parameter. The cardinality of a minimum connected vertex cover or a minimum connected dominating set are just two examples of such parameters that have received considerable interest from both the algorithmic and structural graph theory communities. An interesting question is what effect the additional connectivity constraint has on the value of the graph parameter in question. The {\it price of connectivity} for a certain graph property, for which a connected variant exists, and a class of graphs ${\cal G}$ is defined as the 
worst-case ratio $\pi'(G)/\pi(G)$ over all connected graphs $G\in {\cal G}$, where $\pi(G)$ and $\pi'(G)$ denote the smallest subset and smallest connected subset, respectively, of the vertices of $G$ satisfying the property.

Cardinal and Levy~\cite{CL10} coined the term ``price of connectivity''.  They did this for vertex cover. Let $\vc(G)$ denote the vertex cover number, which is the cardinality of a minimum vertex cover of a graph $G$. The connected variant of this parameter is the connected vertex cover number, denoted by $\cvc(G)$ and defined as the cardinality of a minimum connected vertex cover in $G$.
Cardinal and Levy~\cite{CL10} proved that  the price of connectivity for vertex cover is at most $2/(1+\epsilon)$ for graphs with average degree $\epsilon n$.
Camby, Cardinal, Fiorini and Schaudt~\cite{CCFS14} considered general graphs and proved that for every connected graph $G$, it holds that $\cvc(G)\leq 2\cdot \vc(G) -1$, that is, the price of connectivity for vertex cover is upper bounded by~2 for the class of all graphs. The same authors showed that the bound of~$2$ is asymptotically sharp for the classes of all paths and all cycles. Camby et al.~\cite{CCFS14} also provided forbidden induced subgraph characterizations of graph classes for which the price of connectivity for vertex cover is upper bounded by $t$, for $t\in \{1,4/3,3/2\}$.

Grigoriev and Sitters~\cite{GS09} proved that the price of connectivity for face hitting set is at most~11 for planar graphs of minimum degree at least~3. This upper bound was later reduced to~5 by Schweitzer and Schweitzer~\cite{SS10}, who also proved that their bound is tight.
 
For a graph $G$, the (connected) domination number is the size of a smallest (connected) dominating set of $G$; we denote these numbers by $\ds(G)$ and
$\cds(G)$, respectively. Duchet and Meyniel~\cite{DM82} observed that $\cds(G)\leq 3\cdot \ds(G)-2$ for every connected graph~$G$.
Hence, the price of connectivity for dominating set~on general graphs is upper bounded by~$3$. 
If a graph $G$ has no induced subgraph isomorphic to a graph in $\{H_1,\ldots,H_p\}$ then $G$ is said to be {\it $(H_1,\ldots,H_p)$-free}.
Zverovich~\cite{Zve03} proved that for any graph~$G$, it holds that $\cds(H)=\ds(H)$ for each connected induced subgraph $H$ of $G$ if and only if $G$ is $(P_5,C_5)$-free.
Consequently, the price of connectivity for dominating set is exactly~$1$ for the class of $(P_5,C_5)$-free graphs. Camby and Schaudt~\cite{CS14} proved that $\cds(G)\leq \ds(G)+1$ for every connected $(P_6,C_6)$-free graph $G$, and showed that this bound is best possible. They also obtained an upper bound of~$2$ on the price of connectivity for dominating set for $(P_8,C_8)$-free graphs, and 
showed that this bound is sharp even for $(P_7,C_7)$-free graphs. Moreover,
they showed that the general upper bound of~$3$ is asymptotically sharp for $(P_9,C_9)$-free graphs. 

Camby and Schaudt~\cite{CS14} also considered the problem of deciding whether the price connectivity for dominating set is bounded by  some integer~$r$ and proved 
that this problem is $P^{\mbox{{\small NP}}[\log]}$-complete for every fixed constant $1<r<3$.

\subsection{Our Results}\label{l-ours}
We initiate the study of the price of connectivity for feedback vertex set. A \emph{feedback vertex set} of a graph is a subset of its vertices whose removal yields an acyclic graph, or equivalently, a forest. 
For a graph $G=(V,E)$ and a set $S\subseteq V$ we let $G[S]$ denote the subgraph of $G$ induced by $S$.
A feedback vertex set~$S$ of a graph $G$ is {\it connected} if  $G[S]$ is connected. We write $\fvs(G)$ and $\cfvs(G)$ to denote the cardinalities of a minimum feedback vertex set and a minimum connected feedback vertex set of a graph $G$, respectively. For a class of graphs ${\cal G}$, the {\em price of connectivity for feedback vertex set} (poc-fvs) for ${\cal G}$ is defined to be the maximum ratio $\cfvs(G)/\fvs(G)$ over all connected graphs $G\in {\cal G}$. Graphs consisting of two disjoint cycles that are connected to each other by an arbitrarily long path show that the poc-fvs for general graphs, or even for planar graphs, is not upper bounded by a constant.

Inspired by the work of Camby et al.~\cite{CCFS14}, Zverovich~\cite{Zve03}, and Camby and Schaudt~\cite{CS14}, we focus on graph classes that are characterized by a finite family of forbidden induced subgraphs. Recall that, for a family of graphs ${\cal H}$, a graph $G$ is called ${\cal H}$-free if for every $H\in {\cal H}$, $G$ is $H$-free, that is, if $G$ has no an induced subgraph isomorphic to $H$.  The vast majority of well-studied graph classes have forbidden induced subgraphs characterizations, and such characterizations can often be exploited when proving structural or algorithmic properties of these graph classes. In fact, for every hereditary graph class~${\cal G}$, that is, for every graph class ${\cal G}$ that is closed under taking induced subgraphs, there exists a family ${\cal H}$ of graphs such that ${\cal G}$ is exactly the class of ${\cal H}$-free graphs. Notable examples of graphs classes that can be characterized using a {\em finite} family of forbidden induced subgraphs include claw-free graphs, line graphs, split graphs and cographs.

Our first main result (Theorem~\ref{t-dichotomy2} below) establishes a dichotomy between the finite families ${\cal H}$ for which the poc-fvs for ${\cal H}$-free graphs is upper bounded by a constant $c_{\cal H}$ and the families ${\cal H}$ for which such a constant $c_{\cal H}$ does not exist. 
In the case where $|{\cal H}|=1$, this dichotomy implies that the poc-fvs for $H$-free graphs is bounded by a constant $c_H$ if and only if $H$ is a {\it linear forest}, i.e., a disjoint union of paths. Our second main result (Theorem~\ref{t-tetrachotomy} below) establishes a more refined tetrachotomy result for the case $|{\cal H}|=1$. More precisely, for every graph $H$, we determine which of the following cases holds: 
\begin{enumerate}[(i)]
\item $\cfvs(G) = \fvs(G)$ for every connected $H$-free graph $G$; 
\item there exists a constant $c_H$ such that $\cfvs(G) \leq \fvs(G)+c_H$ for every connected $H$-free graph $G$;
\item there exists a constant $c_H$ such that $\cfvs(G) \leq c_H \cdot \fvs(G)$ for every connected $H$-free graph $G$;
\item none of the above three cases applies. 
\end{enumerate}
In order to formally state our results, we need to introduce some terminology. 

For two graphs $H_1$ and $H_2$, we write $H_1+H_2$ to denote the disjoint union of $H_1$ and $H_2$. We write $sH$ to denote the disjoint union of $s$ copies of $H$. For any $k\geq 1$, we write $P_k$ to denote the path on $k$ vertices, that is, the path of length~$k-1$. For any $r\geq 3$, we write $C_r$ to denote the cycle on $r$ vertices. For any three integers $i,j,k$ with $i,j\geq 3$ and $k\geq 1$, we define $B_{i,j,k}$ to be the graph obtained from $C_i+C_j$ by choosing a vertex $x$ in $C_i$ and a vertex $y$ in $C_j$, and adding a path of length $k$ between $x$ and $y$; see Figure~\ref{f-butterfly} for a picture of the graph $B_{5,9,4}$. We call a graph of the form $B_{i,j,k}$ a {\em butterfly}.

\begin{figure}[htb]
\centering
\includegraphics{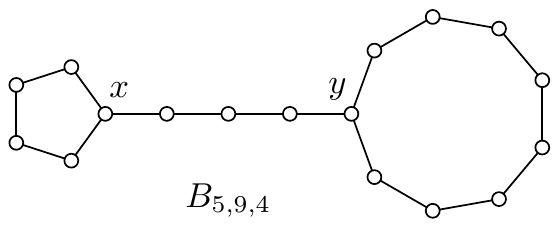}
\caption{The butterfly $B_{5,9,4}$.}
\label{f-butterfly}
\end{figure}

It is clear that the price of connectivity for feedback vertex set for the class of all butterflies is not bounded by a constant, since $\fvs(B_{i,j,k})=2$ and $\cfvs(B_{i,j,k})=k+1$ for every $i,j\geq 3$ and $k\geq 1$. Roughly speaking, our first main result states that the poc-fvs for the class of ${\cal H}$-free graphs is bounded by a constant $c_{\cal H}$ if and only if the forbidden induced subgraphs in ${\cal H}$ prevent arbitrarily large butterflies from appearing as induced subgraphs. To make this statement more concrete, we use the following definition.

\begin{definition}
\label{d-cover}
Let $i,j\geq 3$ be two integers, ${\cal H}$ be a 
finite
family of graphs, and $N= 2 \cdot \max_{H\in {\cal H}} |V(H)| +1$. The family ${\cal H}$ {\em covers} the pair $(i,j)$ if ${\cal H}$ contains an induced subgraph of $B_{i,j,N}$.
A graph $H$ covers the pair $(i,j)$ if $\{H\}$ covers $(i,j)$.
\end{definition}

We are now ready to formally state our first main result, which we prove in Section~\ref{s-1}.

\begin{theorem}
\label{t-dichotomy2}
Let ${\cal H}$ be a finite family of graphs. Then the poc-fvs for the 
class of ${\cal H}$-free graphs is upper bounded by a constant $c_{\cal H}$ if and only if ${\cal H}$ covers the pair $(i,j)$ for every $i,j\geq 3$.
\end{theorem}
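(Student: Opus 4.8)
The plan is to prove the two implications separately, with both resting on one combinatorial lemma that makes the slogan ``$\mathcal H$ covers every pair'' $\equiv$ ``$\mathcal H$-free graphs have no arbitrarily large induced butterflies'' precise. The lemma is a \emph{compression statement}: if $i,j\ge 3$, $m\ge 1$, and a graph $H$ with $|V(H)|\le m$ is an induced subgraph of $B_{i,j,k}$ for some $k\ge 2m+1$, then $H$ is an induced subgraph of $B_{i,j,N}$ for \emph{every} $N\ge 2m+1$. To see this, observe that $H$ meets at most $m$ of the $k-1\ge 2m$ internal vertices of the connecting path of $B_{i,j,k}$, so it misses one, say $p_t$; deleting $p_t$ splits $B_{i,j,k}$ into $C_i$ with a pendant path and $C_j$ with a pendant path, so $H$ decomposes as a disjoint union $H_1\sqcup H_2$ placed on the two sides. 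Since $H_\ell$ uses at most $|V(H_\ell)|$ vertices of its pendant path, and these induce a disjoint union of subpaths, $H_\ell$ embeds as an induced subgraph into $C_i$ (resp. $C_j$) with a pendant path of length at most $2|V(H_\ell)|-1$; as $2|V(H_1)|+2|V(H_2)|=2|V(H)|\le 2m<N$, the two pendant paths of $B_{i,j,N}$ obtained by deleting a suitably chosen internal vertex are long enough to host both embeddings at once. With $N_0=2\max_{H\in\mathcal H}|V(H)|+1$ as in Definition~\ref{d-cover}, this yields: if $\mathcal H$ covers $(i,j)$ then $B_{i,j,k}$ contains an induced member of $\mathcal H$ for all $k\ge N_0$ (hence no $\mathcal H$-free graph has an induced $B_{i,j,k}$ with $k\ge N_0$); and if $\mathcal H$ does not cover $(i,j)$ then $B_{i,j,k}$ is $\mathcal H$-free for all $k\ge N_0$.

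The ``only if'' direction I would argue by contraposition. If $\mathcal H$ fails to cover some pair $(i_0,j_0)$, then by the lemma the graphs $B_{i_0,j_0,k}$ with $k\ge N_0$ form an infinite family of connected $\mathcal H$-free graphs, and since $\fvs(B_{i_0,j_0,k})=2$ while $\cfvs(B_{i_0,j_0,k})=k+1$, the ratio $\cfvs/\fvs$ on this family tends to infinity; hence the poc-fvs for $\mathcal H$-free graphs is not bounded by any constant.

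For the ``if'' direction, assume $\mathcal H$ covers every pair, so by the lemma there is a constant $L:=N_0$, depending only on $\mathcal H$, such that no $\mathcal H$-free graph has an induced $B_{i,j,k}$ with $k\ge L$. I would prove the bound $\cfvs(G)\le (c_L+1)\cdot\fvs(G)$, with $c_L$ depending only on $L$, for every connected $\mathcal H$-free $G$. Fix such a $G$, let $S$ be a minimum feedback vertex set with $k:=|S|\ge 2$ (otherwise the bound is trivial), so $F:=G-S$ is a forest and $G[S]$ has at most $k$ components. Connect $S$ up greedily: starting from $U:=S$, while $G[U]$ is disconnected, pick a path $P$ in $G$ of minimum length over all paths joining two distinct components of $G[U]$ and add its internal vertices to $U$. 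Such a $P$ is an induced path whose interior avoids $U$ (a shorter inter-component path would otherwise arise), hence lies in $F$; each round strictly reduces the number of components of $G[U]$, so at most $k-1$ rounds suffice, and the final $U$ is connected and has its complement inside the forest $F$, so it is a feedback vertex set. Thus $\cfvs(G)\le|U|\le k+(k-1)\,m^\ast$, where $m^\ast$ is the largest number of internal vertices among the paths $P$ used, and the whole proof reduces to showing $m^\ast\le c_L$ for a constant $c_L=c_L(L)$.

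Proving $m^\ast\le c_L$ is the main obstacle: one must show that a long greedy path forces a large induced butterfly. If $P=x_0x_1\cdots x_\ell$ is such a path, minimality of $\ell$ forces every internal $x_j$ with $2\le j\le\ell-2$ to have no neighbour in $U$, in particular none in $S$, so the bulk of $P$ lies at distance at least two from $S$. The two components joined by $P$ each contain a vertex of $S$, and since $S$ is a minimum feedback vertex set, every $v\in S$ lies on a cycle $C_v$ of $G$ all of whose other vertices lie in $F$ (and hence in a single tree of $F$, being connected). The aim is to combine a cycle located near $x_0$, a cycle located near $x_\ell$, and a long subpath of $P$ into an induced $B_{i,j,k'}$ with $k'\ge L$, contradicting $\mathcal H$-freeness. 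The difficulties are all structural: the cycles supplied by minimality only pass through prescribed $S$-vertices and may be long, non-chordless, mutually overlapping, or meet the interior of $P$, and the interior of $P$ may itself be adjacent to further cycles of $G$. I would resolve this by surgery -- pass to chordless subcycles, reroute $P$ at the last point where it touches the cycle chosen near $x_0$ (and symmetrically near $x_\ell$), and whenever the interior of the resulting path is adjacent to another cycle, split off the corresponding shorter butterfly and recurse -- choosing the thresholds so that every constant that arises depends only on $L$ (not on $k$ or on $G$) while a connecting path of length at least $L$ always survives. This is the delicate step; granting it, $\cfvs(G)\le k+(k-1)c_L\le(c_L+1)\,k=(c_L+1)\fvs(G)$, which completes the proof.
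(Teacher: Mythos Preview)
Your compression lemma and the ``only if'' direction are correct and match the paper's argument (your lemma is essentially the paper's Observation~\ref{o-smallsubgraphs}).

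For the ``if'' direction there is a genuine gap: you explicitly leave the surgery step unproven (``This is the delicate step; granting it, \ldots''), and the difficulties you list are real. In particular, in later rounds of your greedy procedure the endpoints $x_0,x_\ell$ of $P$ may lie on path-segments added in earlier rounds rather than in $S$, so the nearest $S$-vertex (and hence its cycle $C_v$) can be far from $x_0$ inside the current component of $G[U]$; that distance depends on the lengths of paths already added, creating a circular dependence between $m^\ast$ and the quantity you need to bound it. Even in the first round, where $x_0,x_\ell\in S$, the cycles $C_{x_0},C_{x_\ell}$ can be arbitrarily long, can overlap each other and the interior of $P$, and extracting an \emph{induced} butterfly with a connecting path of length at least $L$ requires substantially more than the recursive sketch you give.

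The paper sidesteps all of this with a cleaner reduction. First delete degree-$1$ vertices (this preserves both $\fvs$ and $\cfvs$), so one may assume $G$ has minimum degree at least~$2$ and is not a cycle. Then prove directly that $G$ has \emph{diameter at most $4N$}, where $N=N_0$; this immediately gives $\cfvs(G)\le 4N\cdot\fvs(G)$ by connecting any minimum feedback vertex set through shortest paths to a fixed base vertex. For the diameter bound, suppose $d(x,y)>4N$, let $A,B$ be the radius-$N$ balls around $x,y$, and split into three cases according to whether $G[A]$ and $G[B]$ contain cycles. The minimum-degree assumption is what makes this work: if $G[A]$ is acyclic then every leaf of $G[A]$ is at distance exactly $N$ from $x$ and $x$ has degree $\ge 3$, so $G[A]$ contains an induced $T^{N,N}_N$; otherwise it contains an induced cycle. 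In each of the three cases one exhibits, inside $G[A\cup B]$ together with a shortest connecting path, an induced copy of some $H\in\mathcal H$ (using that $\mathcal H$ covers the relevant pair and that $|V(H)|\le N$), contradicting $\mathcal H$-freeness. This argument never needs to locate cycles relative to a moving greedy path; it only analyses two fixed-radius balls whose structure is controlled by the degree condition, which is why it closes cleanly where your approach stalls.
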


\begin{figure}
\begin{center}
\includegraphics[scale=1]{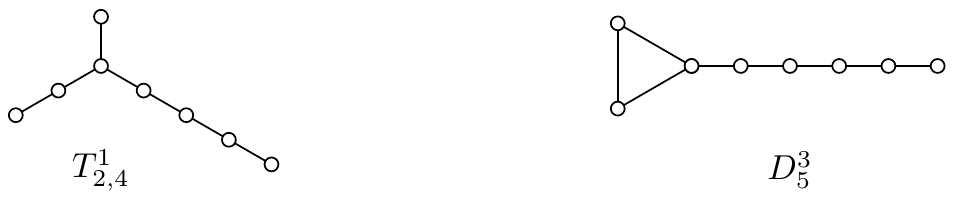}
\caption{The graphs $T^1_{2,4}$ and $D^3_5$.}
\label{f-sijk}
\end{center}
\end{figure}

By using Theorem~\ref{t-dichotomy2} we are able to give obtain an explicit description of exactly those families $\{H_1,H_2\}$ for which the poc-fvs for $\{H_1,H_2\}$-free graphs is upper bounded by a constant. For any $k,p,q\geq 1$, let $T_k^{p,q}$ denote the graph obtained from $P_k+P_p+P_q$ by making a new vertex adjacent to one end-vertex of each path. For any $k\geq 0$ and $r\geq 3$, let $D_k^r$ denote the graph obtained from $P_k+C_r$ by adding an edge between a vertex of the cycle and an end-vertex of the path; in particular, $D_0^r$ is isomorphic to $C_r$. See Figure~\ref{f-sijk} for a picture of the graphs $T^1_{2,4}$ and $D^3_5$.

\begin{corollary}
\label{cor:H1-H2-free}
Let $H_1$ and $H_2$ be two graphs, and let ${\cal H}=\{H_1,H_2\}$. Then the poc-fvs for ${\cal H}$-free graphs is upper bounded by a constant $c_{\cal H}$ if only if there exist integers $\ell\geq 0$ and $r\geq 1$ such that one of the following conditions holds:
\begin{itemize}
\item $H_1$ or $H_2$ is a linear forest;
\item $H_1$ and $H_2$ are induced subgraphs of $D_\ell^3$ and $2T^{1,1}_r$, respectively;
\item $H_1$ and $H_2$ are induced subgraphs of $2D_\ell^3$ and $T^{1,1}_r$, respectively.
\end{itemize}
\end{corollary}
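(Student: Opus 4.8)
The strategy is to translate the combinatorial condition of Theorem~\ref{t-dichotomy2}---that $\{H_1,H_2\}$ covers every pair $(i,j)$ with $i,j\geq 3$---into the concrete structural description above. First I would dispose of the case in which at least one of $H_1,H_2$ is a linear forest. A linear forest, being acyclic, is an induced subgraph of $B_{i,j,N}$ for all sufficiently large $i,j$ (indeed of $B_{i,j,N}$ for all $i,j\geq 3$ once $N$ is large, by placing it on the connecting path), so in that case $\{H_1,H_2\}$ trivially covers every pair and the poc-fvs is bounded; this gives the first bullet immediately in both directions. So from now on assume neither $H_i$ is a linear forest, i.e.\ each $H_i$ contains a cycle.

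The core of the argument is the following dichotomy on a single graph $H$ that contains a cycle: $H$ is an induced subgraph of $B_{i,j,N}$ if and only if either (a) $H$ is a single "lollipop-type" piece, i.e.\ an induced subgraph of some $D_\ell^3$ with $\ell$ not too large and the cycle length dividing one of $i,j$, or the connected pieces of $H$ split as two such lollipops whose cycle lengths match $i$ and $j$ respectively; equivalently, since any induced subgraph of $B_{i,j,N}$ has at most two cycles, and each cycle of $H$ must be an induced cycle of $B_{i,j,N}$ hence a $C_i$ or a $C_j$, while any path-like attachment is bounded in length by $|V(H)|$. I would prove this by analysing where the vertices of $H$ sit inside $B_{i,j,N}$: the two cycles $C_i,C_j$ of the butterfly, the long connecting path, and the two "attachment" vertices $x,y$. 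Since $|V(H)|$ is bounded, $H$ cannot use the whole long path, so $H$ decomposes into at most two components, each of which lives in a "neighbourhood" consisting of one of the two cycles plus a bounded-length dangling path---and such a graph is exactly an induced subgraph of some $D_\ell^3$ (or $D_\ell^r$ with $r=i$ or $r=j$), and over all large $i,j$ the only cycle length that can be forced for \emph{every} pair is a triangle, hence $D_\ell^3$.

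Next I would assemble the two-graph case. We must cover \emph{every} pair $(i,j)$ with $i,j\geq 3$. Consider the pair $(3,3)$, and also pairs $(3,j)$ and $(i,j)$ with $i,j$ large and distinct and coprime. For $H$ to be an induced subgraph of $B_{i,j,N}$ for all such pairs, any cycle appearing in $H$ must have length $3$ (a longer fixed cycle length cannot divide or equal $i$ for all large $i$), which forces each cyclic component of each $H_i$ to be an induced subgraph of some $D_\ell^3$, and forces the acyclic components to be linear forests, and in fact (for the join structure to have at most two cycles total across $H_1$ and $H_2$ combined per pair) the cycles must be distributed as either $1+1$ or $2+0$ between $H_1$ and $H_2$. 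Spelling this out: if $H_1$ has exactly one cycle and $H_2$ has exactly one cycle, then each is (an induced subgraph of) a $D_\ell^3$ possibly together with extra path components---but extra components that are not part of the lollipop would force $H_i$ to occupy a third "region" of the butterfly, which is impossible for all large pairs \emph{unless} those components are themselves swallowed; a careful count shows $H_1\subseteq_i 2T^{1,1}_r$ type configurations arise precisely when one of them carries both cycles. I would organize this as: (I) determine how many cycles each $H_i$ can have; (II) within each $H_i$, determine that the cyclic part is an induced subgraph of $D_\ell^3$ (resp.\ $2D_\ell^3$) and the acyclic part is an induced subgraph of a bounded linear forest, which is subsumed by $T^{1,1}_r$ (resp.\ $2T^{1,1}_r$) for $r$ large; (III) check the converse, namely that each listed configuration does cover every pair $(i,j)$: $D_\ell^3 \subseteq B_{i,j,N}$ and $2T^{1,1}_r\subseteq B_{i,j,N}$ (as the two cycles with their dangling-path stubs) for all $i,j\geq 3$, invoking $N=2\max|V(H)|+1$ to guarantee the connecting path is long enough to host the $T^{1,1}_r$ pieces without them touching.

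The main obstacle I anticipate is step (II)--(III): precisely pinning down which linear forests "fit alongside" a triangle inside a butterfly, and proving that $T^{1,1}_r$ and $2T^{1,1}_r$ are exactly the right extremal containers---this requires a somewhat delicate case analysis of how the components of $H_i$ can be embedded into the disjoint regions of $B_{i,j,N}$ (one cycle, the other cycle, the bridge path, and the two bridge endpoints which belong to the cycles), keeping track of which embeddings survive for \emph{all} choices of $i$ and $j$. The parity/coprimality trick on $(i,j)$ is what forces triangles, and the bounded size of $H$ is what forces the decomposition into $\le 2$ lollipop-like pieces; combining these two forcings cleanly is the crux.
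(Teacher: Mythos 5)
Your high-level strategy (translate Theorem~\ref{t-dichotomy2} into ``$\{H_1,H_2\}$ covers every pair $(i,j)$'', classify the induced subgraphs of butterflies, and then see which combinations can cover the whole table of pairs) is exactly the paper's approach. However, there is a genuine error at the start of your case analysis that derails the rest: you assert that if neither $H_i$ is a linear forest then \emph{each $H_i$ contains a cycle}. That is false --- a graph can fail to be a linear forest by containing a vertex of degree~$3$ in a tree, i.e.\ an induced spider $T_r^{p,q}$, and this is precisely the case that matters here. Indeed, if both $H_1$ and $H_2$ contained cycles, of lengths $p$ and $q$ say, then neither would be an induced subgraph of $B_{i,j,N}$ for any pair with $\{i,j\}\cap\{p,q\}=\emptyset$, so the family could not cover all pairs. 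Hence in every non-trivial configuration exactly one of the two graphs must be \emph{acyclic but not a linear forest}; this is why the corollary's second and third bullets pair a triangle-containing graph ($D_\ell^3$ or $2D_\ell^3$) with an acyclic spider forest ($2T_r^{1,1}$ or $T_r^{1,1}$). Your plan instead treats $2T^{1,1}_r$ as ``the two cycles with their dangling-path stubs'' and proposes a ``$1+1$ or $2+0$'' distribution of cycles between $H_1$ and $H_2$; the $1{+}1$ case is vacuous and the description of the $T$-graphs is wrong, so steps (I)--(III) as written would not produce the stated classification.

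Concretely, the forcing argument you are missing runs on three special families of pairs. The pair $(3,3)$ can only be covered by an induced subgraph of $2D_\ell^3$, because every vertex of degree~$3$ in $B_{3,3,N}$ has two mutually adjacent neighbours, so $B_{3,3,N}$ contains no induced spider and its acyclic induced subgraphs are linear forests; this forces one graph, say $H_1$, to be an induced subgraph of $2D_\ell^3$. The pair $(4,4)$ is then not covered by $H_1$ (its triangle does not embed), so $H_2$ must cover $(4,4)$, which by the two-spider covering lemma forces all leg lengths to be $1$, i.e.\ $H_2$ is an induced subgraph of $2T_r^{1,1}$. Finally, the pairs $(3,j)$ for $j\geq 4$ separate the two remaining bullets: if $H_1$ genuinely needs two triangles (is not inside a single $D_{\ell'}^3$) it covers only $(3,3)$, so $H_2$ must cover all of $(3,j)$, $j\geq 4$, which $2T_r^{1,1}$ cannot do (it needs $\min\{i,j\}\geq 4$) --- forcing $H_2$ to be a single $T_{r'}^{1,1}$. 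Your ``coprimality of large $(i,j)$'' argument does correctly show that any cycle in $H_1$ or $H_2$ must be a triangle, but it cannot substitute for this $(3,3)/(4,4)/(3,j)$ analysis, and without it you cannot recover the precise split between the second and third bullets of the statement.
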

We prove Corollary~\ref{cor:H1-H2-free} in Section~\ref{s-proof2} after first 
proving a sequence of lemmas that show exactly which graphs $H$ cover which pairs $(i,j)$. 
In fact we describe a procedure that, given a positive integer $k$, yields an explicit description of all the 
graph families ${\cal H}$ with $|{\cal H}|=k$ for which the poc-fvs for ${\cal H}$-free graphs is upper bounded by a constant (but we only give an explicit description for $k=2$ as stated in Corollary~\ref{cor:H1-H2-free}). 

As every graph $H$ that is an induced subgraph of both $D_\ell^3$ for some $\ell\geq 0$ and of $2T^{1,1}_r$ for some $r\geq 1$ is a linear forest, Corollary~\ref{cor:H1-H2-free} implies that for any graph~$H$, the poc-fvs for the class of $H$-free graphs is upper bounded by a constant $c_{\cal H}$ if and only if $H$ is a linear forest. Our second main result, which is proven in Section~\ref{s-tetrachotomy}, refines this statement.

\begin{theorem}
\label{t-tetrachotomy}
Let $H$ be a graph. Then it holds that 
\begin{enumerate}[(i)]
\item $\cfvs(G) = \fvs(G)$ for every connected $H$-free graph $G$ if and only if $H$ is an induced subgraph of $P_3$;
\item there exists a constant $c_H$ such that $\cfvs(G) \leq \fvs(G)+c_H$ for every connected $H$-free graph $G$ if and only if $H$ is an induced subgraph of $P_5+sP_1$ or $sP_3$ for some $s\geq 0$;
\item there exists a constant $c_H$ such that $\cfvs(G) \leq c_H \cdot \fvs(G)$ for every connected $H$-free graph $G$ if and only if $H$ is a linear forest.
\end{enumerate}
\end{theorem}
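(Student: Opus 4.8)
The plan is to prove each of the three items by establishing a matching pair of bounds (a structural/algorithmic upper bound for the "easy" graphs $H$, and a family of hard examples for all other $H$). I would handle the three items roughly in increasing order of difficulty, reusing the hard examples built for a later item as the negative side of an earlier item whenever possible.

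For item (iii), the backward direction (if $H$ is a linear forest, then $\cfvs(G)\le c_H\cdot\fvs(G)$) follows from Theorem~\ref{t-dichotomy2}: a linear forest $H$ is an induced subgraph of some long path $P_m$, and I would check that $P_m$ — hence $H$ — covers every pair $(i,j)$, since $B_{i,j,N}$ contains a long induced path whenever $N$ is large. The forward direction is the negative side: if $H$ is not a linear forest, then $H$ contains an induced cycle or an induced $K_{1,3}$ (claw). In the cycle case, all butterflies $B_{i,j,k}$ with $i,j$ large enough are $H$-free (a butterfly has only two cycles, both of bounded... no — I must be careful: butterflies have cycles of length $i$ and $j$, so I take $i,j$ strictly larger than $|V(H)|$), and these have $\fvs=2$ but $\cfvs=k+1\to\infty$. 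In the claw case I would use a different gadget: subdivide the claw's center–leaf edges heavily and attach a short cycle to each leaf, or more simply observe that the "two cycles joined by a long path, with the path heavily subdivided" graph is claw-free, again giving unbounded ratio. This also gives the forward direction of (i) and (ii) for all non-linear-forest $H$, so the remaining work in (i) and (ii) concerns linear forests only.

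For item (ii), the backward direction requires showing $\cfvs(G)\le\fvs(G)+c_H$ when $G$ is $(P_5+sP_1)$-free or $sP_3$-free. The key structural fact to prove is: if $G$ is connected and $G$ excludes a long enough linear forest of this restricted shape as an induced subgraph, then any minimum feedback vertex set $S$ already "almost" induces a connected graph — more precisely, $G-S$ being a forest together with $(P_5+sP_1)$-freeness (or $sP_3$-freeness) of $G$ forces the components of $G[S]$ to be few in number and close together, so they can be joined using a bounded number of extra vertices. I would formalize this by arguing that in a $(P_5+sP_1)$-free graph, a forest on many vertices (the set $G-S$) must have bounded "spread", so $S$ has boundedly many components and boundedly short connecting paths. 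For the forward (negative) direction, if $H$ is a linear forest that is \emph{not} an induced subgraph of any $P_5+sP_1$ or any $sP_3$, I would show $H$ must contain $P_6$ or $2P_3+P_1$ or similar, and then exhibit $H$-free graphs (long-path-connected cycle pairs, with the connecting path chosen to avoid the induced subgraph $H$ but long enough to force $\cfvs-\fvs\to\infty$) witnessing that no additive constant works.

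For item (i), the backward direction is: if $H\subseteq_i P_3$ then every connected $H$-free graph $G$ satisfies $\cfvs(G)=\fvs(G)$. The relevant cases are $H\in\{P_1,P_2,P_3\}$ (and the empty graph, trivially); $P_1$-free means $G$ is empty, $P_2$-free means edgeless, and $P_3$-free means $G$ is a disjoint union of cliques, so a connected $P_3$-free graph is a clique, for which one checks $\fvs=\cfvs=\max(0,n-2)$ directly. For the forward direction, I need: if $H$ is not an induced subgraph of $P_3$, then some connected $H$-free graph has $\cfvs>\fvs$. If $H$ is a linear forest not inside $P_3$, then $H$ has $P_4$ or $2P_1$ or $P_2+P_1$ as relevant obstruction; in each case I exhibit a small connected graph (e.g. two triangles sharing a vertex, or two triangles joined by an edge, appropriately chosen to be $H$-free) with $\fvs=1$ but $\cfvs=2$, or similar. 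If $H$ is not a linear forest, item (iii)'s negative construction already gives even unbounded ratio. The main obstacle I anticipate is the backward direction of item (ii): proving the additive bound for $(P_5+sP_1)$-free and $sP_3$-free graphs requires a genuine structural argument about how a forbidden long/wide induced linear forest constrains the component structure of $G-S$ and the distances between components of $G[S]$, and getting the constant $c_H$ to depend only on $H$ (not on $G$) is where the care is needed.
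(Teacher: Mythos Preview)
Your overall architecture is right, and items (i) and (iii) are essentially on track, but there is a genuine gap in the forward (negative) direction of item~(ii), and your proposed examples in item~(i) do not actually work.

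For (ii) forward, the correct minimal obstructions are $P_6$ and $P_4+P_2$, not $2P_3+P_1$ (the latter is an induced subgraph of $3P_3$, so it is \emph{not} an obstruction). More seriously, your proposed witnesses --- ``long-path-connected cycle pairs'', i.e.\ butterflies $B_{i,j,k}$ --- \emph{always} contain both $P_6$ and $P_4+P_2$ as induced subgraphs once $k$ is large, so they are never $(P_6,P_4+P_2)$-free and cannot serve as the required family. You need a family of bounded diameter (so as to be $P_6$-free) in which $\cfvs-\fvs$ is nevertheless unbounded. The paper's construction is $L_k$: take $k$ disjoint hourglasses (two triangles sharing a vertex $y_i$) and add a new vertex $x$ adjacent to every degree-$2$ vertex of every hourglass. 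One checks $L_k$ is $(P_6,P_4+P_2)$-free, $\fvs(L_k)=k+1$ (the set $\{x,y_1,\dots,y_k\}$), and $\cfvs(L_k)=2k+1$. Nothing in your plan hints at such a gadget, and the butterfly idea cannot be repaired.

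For (i) forward, your candidate graphs fail: in the hourglass the single cut-vertex is already a connected feedback vertex set, so $\cfvs=\fvs=1$; for two triangles joined by an edge, the two endpoints of that edge form a connected feedback vertex set, so $\cfvs=\fvs=2$. The paper reduces to the two minimal cases $H=P_2+P_1$ and $H=3P_1$ (note $P_4$ contains $P_2+P_1$) and uses $K_{3,\ell}$ for the former and a specific $6$-vertex $3P_1$-free graph (two nonadjacent vertices made adjacent to all of a $P_4$) for the latter; both have $\fvs=2$ and $\cfvs=3$.

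Finally, for (ii) backward you correctly flag this as the hard part, but your sketch (``bounded spread of the forest $G-S$'') is not how the paper proceeds and is unlikely to yield a clean argument. For $(P_5+sP_1)$-free graphs the paper uses a dominating-set shortcut: a connected $P_5$-free graph has a dominating clique or dominating $P_3$ (Bacs\'o--Tuza), and in the general case one finds a dominating set of size $O(s)$ and applies Duchet--Meyniel to make it connected; adding this to any minimum FVS gives the additive bound. The $sP_3$-free case is substantially harder and is done by induction on $s$ with a delicate component-merging argument (Lemmas~\ref{l-theclaim} and~\ref{l-sp3} in the paper).
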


\subsection{Future Work}
\label{s-conclusion}

A natural question to ask is whether Theorem~\ref{t-dichotomy2} can be extended to families ${\cal H}$ which are not finite, i.e., to all hereditary classes of graphs. Definition~\ref{d-cover} and Theorem~\ref{t-dichotomy2} show that for any finite family ${\cal H}$, the poc-fvs for ${\cal H}$-free graphs is bounded essentially when the graphs in this class do not contain arbitrarily large induced butterflies. The following example shows that when ${\cal H}$ is infinite, it is no longer only butterflies that can cause the poc-fvs to be unbounded. Let $G$ be a graph obtained from $K_3$ by first duplicating every edge once, and then subdividing every edge arbitrarily many times. Let ${\cal G}$ be the class of all graphs that can be constructed this way. In order to make ${\cal G}$ hereditary, we take its closure under the induced subgraph relation. Let ${\cal G}'$ be the resulting graph class. Observe that graphs in this class have arbitrarily large minimum connected feedback vertex sets, while $\fvs(G)\leq 2$ for every graph $G\in {\cal G}'$. Hence, the poc-fvs for ${\cal G}'$ is not bounded. However, no graph in this family contains a butterfly as an induced subgraph.

Another natural question to ask is how to extend Theorem~\ref{t-tetrachotomy}  to all finite families of graphs ${\cal H}$.

Also, instead of demanding that the graph obtained after removing some subset of vertices is an independent set or a forest, as in the case of vertex
cover or feedback vertex set, respectively, we may impose other restrictions on the graph induced by the remaining vertices.
Recently, Hartinger et al.~\cite{HJMP15} obtained several results for the price of connectivity in this direction but many open problems remain (see~\cite{HJMP15} for details).

\section{Proof of Theorem~\ref{t-dichotomy2}}\label{s-1}
\label{s-proof}

In order to prove Theorem~\ref{t-dichotomy2} we will make use of the following two observations.

\begin{observation}
\label{o-smallsubgraphs}
Let $i,j,k,\ell$ be integers such that $i,j\geq 3$ and $\ell\geq k\geq 1$. A graph on at most $k$ vertices is an induced subgraph of $B_{i,j,k}$ if and only if it is an induced subgraph of $B_{i,j,\ell}$. 
\end{observation}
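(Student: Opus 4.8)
The plan is to prove both directions of the biconditional in Observation~\ref{o-smallsubgraphs} by exhibiting an explicit induced-subgraph embedding of $B_{i,j,k}$ into $B_{i,j,\ell}$ and then arguing that this embedding captures all small induced subgraphs. Recall that $B_{i,j,k}$ consists of two cycles $C_i$ and $C_j$ joined by a path $P$ of length $k$ whose endpoints $x$ and $y$ lie on the two cycles respectively; so $B_{i,j,k}$ has exactly $i+j+k-1$ vertices, of which $k-1$ are the internal path vertices.

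First I would fix, inside $B_{i,j,\ell}$, the sub-butterfly $B$ obtained by taking the two cycles $C_i$ and $C_j$ together with the first $k$ edges of the connecting $P_{\ell+1}$ path starting from $x$; concretely, if the connecting path of $B_{i,j,\ell}$ is $x = p_0, p_1, \dots, p_\ell = y$, then $B$ is induced on $V(C_i) \cup \{p_0, p_1, \dots, p_{k-1}\} \cup \{v\}$ where $v$ is the neighbour of $x$ on $C_j$ — wait, more cleanly: the induced subgraph of $B_{i,j,\ell}$ on $V(C_i) \cup \{p_1, \dots, p_{k-1}\} \cup (\text{a }C_j\text{ in the other component})$ is not quite a butterfly, so instead I embed $B_{i,j,k}$ by mapping its $C_i$ to the $C_i$ of $B_{i,j,\ell}$, its internal path vertices to $p_1, \dots, p_{k-1}$, and its $C_j$ to the $C_j$ of $B_{i,j,\ell}$ but relocating the attachment vertex $y$ of $B_{i,j,k}$ to $p_k$; since $\ell \geq k$, the vertex $p_k$ exists and its only neighbour outside $C_j$ is $p_{k-1}$, so the induced subgraph of $B_{i,j,\ell}$ on $V(C_i) \cup \{p_1,\dots,p_k\} \cup (V(C_j)\setminus\{y\})$ — hmm, $y$ is $p_\ell$, not $p_k$. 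Let me restate: the natural observation is simply that $B_{i,j,k}$ itself occurs as an induced subgraph of $B_{i,j,\ell}$ whenever $\ell \geq k$, by taking all of $C_i$, the path vertices $p_0,\dots,p_k$, and then a $C_j$ attached at $p_k$; but $B_{i,j,\ell}$ does not literally contain such a structure unless one is willing to take the full cycle at the far end. The cleanest correct statement: the subgraph of $B_{i,j,\ell}$ induced on $V(C_i)\cup\{p_1,\dots,p_{\ell-1}\}\cup V(C_j)$ is all of $B_{i,j,\ell}$, and its subgraph induced on $V(C_i) \cup \{p_1, \dots, p_{k-1}\}$ together with the far cycle is a $B_{i,j,\ell}$ — so I will instead use the honest fact that every induced subgraph of $B_{i,j,k}$ on at most $k$ vertices is ``small enough to fit near one of the two cycles'': since such a connected (or even disconnected) subgraph on $\le k$ vertices cannot simultaneously use a vertex of $C_i$, a vertex of $C_j$, and all $k-1$ path-internal vertices unless it is essentially the whole path-plus-a-bit, a case-analysis on whether the subgraph meets $C_i$, meets $C_j$, or meets the middle path segment shows it embeds identically into $B_{i,j,\ell}$.

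Concretely, the key steps are: (1) the ``only if'' direction is immediate since $B_{i,j,k}$ is itself an induced subgraph of $B_{i,j,\ell}$ — take $C_i$, the path vertices $p_0, \dots, p_k$, and note that at $p_k$ we can, using the freedom $\ell \ge k$, instead take the $C_j$ copy of $B_{i,j,k}$ by identifying the far end, but the truly clean argument is that $B_{i,j,\ell}[V(C_i) \cup \{p_1,\dots,p_{k-1}\} \cup \{p_k,\dots\}]$ — I will simply note $B_{i,j,k} = B_{i,j,\ell}[V(C_i) \cup \{p_0,\dots,p_{k-1}\} \cup (V(C_j) \text{ reattached at } p_{k-1})]$ is false, and instead observe the genuinely true fact that the path in $B_{i,j,\ell}$ can be ``shortened'' by taking $V(C_i) \cup \{p_1, \dots, p_{k-1}\} \cup V(C_j')$ where $C_j'$ is the far cycle — this has more than $k$ vertices but contains $B_{i,j,k}$ as an induced subgraph is still not literal. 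Given these subtleties, the honest plan is: for the ``only if'' direction, observe $B_{i,j,k}$ is an induced subgraph of $B_{i,j,\ell}$ via the map sending the connecting path to $p_0,\dots,p_{k}$ and the two cycles to $C_i$ and to the cycle in $B_{i,j,\ell}$ one gets by contracting $p_k,\dots,p_\ell = y$ — which is not an induced-subgraph operation, so in fact the cleanest route is to prove the Observation directly as a statement about arbitrary small induced subgraphs without routing through $B_{i,j,k}$ itself.

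So the real plan: Let $F$ be a graph on at most $k$ vertices. For the forward direction, suppose $F$ is an induced subgraph of $B_{i,j,k}$, witnessed by a vertex set $S \subseteq V(B_{i,j,k})$. Since $|S| \le k$ but the path-internal segment of $B_{i,j,k}$ has $k-1$ vertices, $S$ cannot contain a vertex of $C_i \setminus \{x\}$, all of $\{p_1,\dots,p_{k-1}\}$, and a vertex of $C_j \setminus \{y\}$ simultaneously. I will split into cases according to which ``end'' of the butterfly $S$ avoids, and in each case map $S$ into the corresponding portion of $B_{i,j,\ell}$ (the $C_i$-end, the $C_j$-end, or a length-$k$ window of the middle), observing that the induced subgraph is preserved because the relevant portions of $B_{i,j,k}$ and $B_{i,j,\ell}$ are isomorphic. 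The backward direction is symmetric: any induced copy of $F$ in $B_{i,j,\ell}$ on $\le k$ vertices likewise must avoid some end and can be slid back into $B_{i,j,k}$. The main obstacle is bookkeeping the boundary cases where $S$ contains $x$ or $y$ and a long stretch of path vertices — here one must verify that a length-$k$ sub-path of the connecting path of $B_{i,j,\ell}$, possibly together with $x$ or $y$ and a few of its cycle-neighbours, sits inside $B_{i,j,k}$ as an induced subgraph, which it does precisely because the connecting path of $B_{i,j,k}$ plus its two attachment vertices already spans $k+1$ vertices in a path, enough room for any $\le k$-vertex configuration that only meets one cycle.
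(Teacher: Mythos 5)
Your final plan for the forward direction is sound: a set $S$ of at most $k$ vertices of $B_{i,j,k}$ either omits some internal vertex of the connecting path, in which case $G[S]$ splits into a part inside $C_i$ plus an initial segment of the path and a part inside a final segment of the path plus $C_j$, and these two parts can be re-embedded around the two cycles of $B_{i,j,\ell}$ with at least as much separation since $\ell\geq k$; or $S$ contains all $k-1$ internal path vertices and hence at most one further vertex, so $G[S]$ is a linear forest on at most $k$ vertices, which clearly embeds in the longer butterfly. However, the first two-thirds of your write-up should be deleted: $B_{i,j,k}$ is in general \emph{not} an induced subgraph of $B_{i,j,\ell}$ (for instance, $B_{3,3,1}$ consists of two triangles joined by an edge, while no two triangles of $B_{3,3,2}$ are joined by an edge), you notice this repeatedly, and the observation never claims it --- it only concerns graphs on at most $k$ vertices, whereas $B_{i,j,k}$ has $i+j+k-1>k$ vertices.

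The genuine gap is the backward direction, which you dismiss in one sentence as ``symmetric''. It is not: there the induced copy of $F$ sits on a connecting path with $\ell-1$ internal vertices and must be compressed into one with only $k-1$, and when the selected path vertices are pairwise non-adjacent you need roughly $2|V(F)|-1$ positions to reproduce all the non-adjacencies. Indeed the equivalence fails exactly here: $5P_1$ has $k=5$ vertices and is an induced subgraph of $B_{3,3,9}$ (take $p_1,p_3,p_5,p_7$ on the connecting path together with a triangle vertex other than $x$), yet a largest independent set of $B_{3,3,5}$ has size $4$ (at most one vertex from each triangle and at most two from the $P_4$ of internal path vertices), so $5P_1$ is not an induced subgraph of $B_{3,3,5}$. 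Hence no argument for the backward direction for graphs on up to $k$ vertices can succeed. Your compression idea does go through for graphs on at most $\lfloor k/2\rfloor$ vertices --- each of the at most $|V(F)|$ selected path vertices needs at most one adjacent empty position, plus a guard position next to $x$ or $y$ when those are selected, and these fit into $k-1$ slots --- and this weaker form is all that is ever invoked, since Observation~\ref{o-smallsubgraphs} is only applied to graphs $H\in{\cal H}$ with $|V(H)|\leq (N-1)/2$ and path lengths at least $N$. You should prove that weaker form with the counting made explicit, and flag the discrepancy with the statement as given.
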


\begin{observation}
\label{o-nicefvs}
Let $G$ be a connected graph that is not a cycle. Then $G$ has a minimum feedback vertex set $F$ such that each vertex in $F$ lies on a cycle and has degree at least~$3$ in $G$.
\end{observation}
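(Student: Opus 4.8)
The plan is to start with an arbitrary minimum feedback vertex set $F_0$ of $G$ and modify it, vertex by vertex, into a feedback vertex set $F$ of the same size all of whose vertices lie on cycles and have degree at least $3$ in $G$. First I would observe that any vertex $v \in F_0$ that does not lie on a cycle of $G$ is useless: every cycle of $G$ that $v$ hits also passes through some other vertex of that cycle, but more to the point, since $v$ is on no cycle at all, $G - (F_0 \setminus \{v\})$ can contain a cycle through $v$ only if that cycle avoids $F_0 \setminus \{v\}$; however such a cycle would be a cycle of $G$ through $v$, contradicting that $v$ is on no cycle. Hence $F_0 \setminus \{v\}$ is already a feedback vertex set, contradicting minimality of $F_0$. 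So every vertex of a minimum feedback vertex set automatically lies on a cycle of $G$; the content of the observation is really the degree condition.

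Next I would handle the degree-$2$ vertices. Suppose $v \in F_0$ has degree $2$ in $G$, with neighbours $a$ and $b$. Since $v$ lies on a cycle, $v$ lies on every cycle through the edge $va$ and every such cycle continues through $vb$; in other words, every cycle of $G$ containing $v$ passes through both $a$ and $b$ and uses $v$ only as an internal vertex of the $a$--$v$--$b$ path. The idea is to \emph{slide} $v$ along to a neighbour: I claim that $F_1 := (F_0 \setminus \{v\}) \cup \{a\}$ (or with $b$, whichever we prefer) is again a feedback vertex set of the same cardinality, unless $a \in F_0$ already, in which case $F_0 \setminus \{v\}$ is a smaller feedback vertex set and we contradict minimality. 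Indeed, any cycle $C$ in $G - F_1$ avoids $a$, hence avoids $v$ (as $v$'s only way onto a cycle is via $a$ and $b$), hence $C$ is a cycle in $G - F_0$, contradiction. Now $a$ lies on a cycle of $G$ (by the first paragraph applied to the minimum feedback vertex set $F_1$), so the only remaining issue is that $a$ might again have degree $2$.

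To avoid looping forever when sliding along a path of degree-$2$ vertices, I would set up a cleaner global argument rather than a naive iteration: consider the (nonempty, since $G$ is not a cycle and $\fvs(G)\ge 1$) set of minimum feedback vertex sets $F$ of $G$, and among those choose one, say $F$, that \emph{maximises} the number of vertices of $F$ having degree at least $3$ in $G$; subject to that, one can add a secondary tie-break if needed. If $F$ still contains a degree-$2$ vertex $v$ with neighbours $a,b$, then by the sliding argument above both $(F\setminus\{v\})\cup\{a\}$ and $(F\setminus\{v\})\cup\{b\}$ are minimum feedback vertex sets; following the maximal path of degree-$2$ vertices of $G$ starting from $v$ in the direction of $a$, this path cannot be all of a cycle (again because its vertices lie on cycles and $G$ is connected but not a single cycle), so it terminates at a vertex $w$ of degree at least $3$; sliding $v$ all the way to $w$ yields a minimum feedback vertex set with strictly more degree-$\ge 3$ vertices, contradicting the choice of $F$. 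Hence $F$ has no degree-$2$ vertex, and combined with the first paragraph every vertex of $F$ lies on a cycle and has degree at least $3$, as required.

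The main obstacle I anticipate is making the ``slide along a path of degree-$2$ vertices until you hit a branch vertex'' step fully rigorous: one must check that such a path of degree-$2$ vertices cannot close up into a component that is a cycle (ruled out by connectivity plus $G$ not being a cycle, but it needs a sentence), and that each intermediate slide genuinely produces a feedback vertex set of the same size — i.e. that the vertex we slide onto was not already in $F$, which is where minimality of $F$ is used. Packaging this as an extremal-choice argument rather than an explicit induction is what keeps the write-up short and avoids an awkward termination argument.
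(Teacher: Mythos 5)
Your argument is correct. The paper states Observation~\ref{o-nicefvs} without proof, so there is nothing to compare against; your two-stage argument (minimality forces every vertex of a minimum feedback vertex set to lie on a cycle, and an extremal choice plus sliding a degree-$2$ vertex along the cycle it lies on to the first branch vertex eliminates degree-$2$ vertices) is a complete and valid justification, and each individual slide is verified correctly. Two small points worth tightening in a write-up: the parenthetical ``$\fvs(G)\ge 1$'' is neither needed for the nonemptiness of the set of minimum feedback vertex sets nor true in general ($G$ may be a tree, in which case the observation holds vacuously with $F=\emptyset$); and when you follow the maximal path of degree-$2$ vertices from $v$, you should say explicitly that all of its vertices lie on the cycle through $v$ (each degree-$2$ vertex forces the cycle to continue through its other neighbour), which simultaneously rules out the path ending at a vertex of degree at most $1$ and shows that, since $G$ is connected and not a cycle, the path must reach a vertex of degree at least~$3$.
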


We are now ready to present the proof of Theorem~\ref{t-dichotomy2}, which we restate below.

\medskip
\noindent
{\bf Theorem~\ref{t-dichotomy2}.}
{\it Let ${\cal H}$ be a finite family of graphs. Then the poc-fvs for  for the class of ${\cal H}$-free graphs is upper bounded by a constant $c_{\cal H}$ if and only if ${\cal H}$ covers the pair $(i,j)$ for every $i,j\geq 3$.}

\begin{proof}
First suppose there exists a pair $(i,j)$ with $i,j\geq 3$ such that ${\cal H}$ does not cover $(i,j)$. For contradiction, suppose there exists a constant $c_{\cal H}$ as in the statement of the theorem. By Definition~\ref{d-cover}, ${\cal H}$ does not contain an induced subgraph of $B_{i,j,N}$, and hence $B_{i,j,N}$ is ${\cal H}$-free. As a result of Observation~\ref{o-smallsubgraphs}, $B_{i,j,k}$ is ${\cal H}$-free for every $k\geq N$. In particular, the graph $B_{i,j,N+2c_{\cal H}}$ is ${\cal H}$-free. Note that $\fvs(B_{i,j,N+2c_{\cal H}})=2$ and $\cfvs(B_{i,j,N+2c_{\cal H}})=N+2c_{\cal H}+1$. This implies that $\cfvs(B_{i,j,N+2c_{\cal H}})>c_{\cal H} \cdot \fvs(B_{i,j,N+2c_{\cal H}})$, yielding the desired contradiction. 

\medskip
\noindent
Now suppose that ${\cal H}$ covers the pair $(i,j)$ for every $i,j\geq 3$. Let $G$ be a connected ${\cal H}$-free graph. Observe that $\cfvs(G)=\fvs(G)$ if $G$ is a cycle or a tree, so we assume that $G$ is neither a cycle nor a tree.
We also assume without loss of generality that $G$ does not contain any vertex of degree~1. This can be seen as follows.
Suppose $G$ has a vertex~$u$ of degree~1. Let $G'$ be the graph obtained from $G$ after removing $u$.
Then $\fvs(G')=\fvs(G)$ and $\cfvs(G')=\cfvs(G)$, because $u$ is contained neither in any minimum feedback vertex set nor in any minimum connected feedback vertex set of $G$ and adding $u$ to $G'$ creates no cycles.
 
Below, we will prove that $G$ has
diameter at most $4N$, that is, the distance in $G$ between any two vertices is at most $4N$. To see why this suffices to prove the theorem, observe that we can transform any feedback vertex set~$S$ of~$G$ into a connected feedback vertex set of $G$ of size at most $4N\cdot |S|=4N\cdot \fvs(G)$ by choosing an arbitrary vertex $x\in S$ and adding, for each $y\in S\setminus \{x\}$, all the internal vertices of a shortest path between $x$ and~$y$.

For contradiction, suppose that the diameter of~$G$ is at least $4N+1$. Pick two vertices $x$ and $y$ that are of maximum distance from each other in~$G$. 
Let $A$ and $B$ be the sets that consist of all vertices of $G$ that are of distance at most~$N$ of $x$ and $y$, respectively. Note that $A\cap B=\emptyset$. 
We distinguish three cases.

\medskip
\noindent
{\it Case 1. Both $G[A]$ and $G[B]$ contain a cycle.}\\
Let $C$ be an induced cycle in $G[A]$ and $D$ be an induced cycle in $G[B]$. 
Let $P=u_1\cdots u_p$ be a shortest path between a vertex $u_1\in C$ and a vertex $u_p\in D$. Note that $p\geq 2N$.  
Because $P$ is a shortest path, no vertex of $\{u_3,\ldots,u_{p-1}\}$ is adjacent to a vertex of $C\cup D$.
Moreover, we may assume without loss of generality that $u_1$ is the only neighbour of $u_2$ on $C$ and that $u_p$ is the only neighbour of $u_{p-1}$ on $D$.
In order to see this, suppose this does not hold for $u_2$. We let $v\in C$ be the neighbour of $u_2$ closest to $u_1$ on $C$.  Then we can replace $C$ by a cycle~$C'$ that consists of $u_1,u_2,v$ and a
path between $u_1$ and $v$ on $C$.
We conclude that the vertices on $C$, $D$ and $P$ together induce a butterfly $B_{i,j,N'}$ for some $i,j\geq 3$ and $N'\geq N$.
Since ${\cal H}$ covers the pair $(i,j)$, there exists a graph $H\in {\cal H}$ such that $H$ is an induced subgraph of $B_{i,j,N}$ by Definition~\ref{d-cover}. Due to Observation~\ref{o-smallsubgraphs}, $H$ is also an induced subgraph of $B_{i,j,N'}$ and hence also of $G$. This contradicts the assumption that $G$ is ${\cal H}$-free.

\medskip
\noindent
{\it Case 2. Both $G[A]$ and $G[B]$ are trees.}\\
Since $G$ does not contain any vertex of degree~1, every leaf of $G[A]$ is at distance exactly $N$ from $x$. Since $x$ has degree at least~3, we conclude that $G[A]$ contains an induced $T_N^{N,N}$.
By the same arguments, $G[B]$ contains an induced $T_N^{N,N}$. 
Since~${\cal H}$ covers the pair $(2N,2N)$, there exists a graph $H\in {\cal H}$ such that $H$ is an induced subgraph of $B_{2N,2N,N}$. This graph $H$ has at most $N$ vertices, which implies that~$H$ has no cycle. However, then $H$ is an induced subgraph of $G[A\cup B]$, and thus of $G$, yielding the same contradiction as in Case~1.

\medskip
\noindent
{\it Case 3. Cases 1-2 do not apply.}\\
Then, as $G[A]$ and $G[B]$ are connected, we may assume without loss of generality that $G[A]$ contains a cycle and $G[B]$ is a tree.
By the arguments of the previous two cases we find that $G$ contains an induced $D_{3N}^i$ for some $i\geq 3$.
Since ${\cal H}$ covers the pair $(i,3N)$, there exists a graph $H\in {\cal H}$ such that $H$ is an induced subgraph of $B_{i,3N,N}$. Since $|V(H)|\leq N$, we find that $H$ contains at most one cycle, and this cycle, if it exists, is of length $i$. Hence, $H$ is an induced subgraph of $G[A\cup B]$ and thus of $G$. With this contradiction we have completed
the proof of Theorem~\ref{t-dichotomy2}.
\qed
\end{proof}

\section{Proof of Corollary~\ref{cor:H1-H2-free}}
\label{s-proof2}

Recall that by Definition~\ref{d-cover} a graph $H$ covers a pair $(i,j)$ if and only if $H$ is an induced subgraph of $B_{i,j,N}$, where $N=2\cdot |V(H)|+1$. In particular, if a graph~$H$ is not an induced subgraph of a butterfly, then $H$ does not cover any pair $(i,j)$. 
For convenience, we describe all the possible induced subgraphs of $B_{i,j,N}$ in the following observation.

\begin{observation}
\label{obs:induced-subgraphs}
Let $H$ be a graph, let $N=2\cdot |V(H)|+1$, and let $i,j\geq 3$ be two integers. Then $H$ is an induced subgraph of $B_{i,j,N}$ if and only if $H$ is isomorphic to the disjoint union of a linear forest (possibly on zero vertices) and at most one of the following graphs:
\begin{enumerate}[(i)]
\item $D_\ell^i$ for some $\ell\geq 0$;
\item $D_\ell^j$ for some $\ell\geq 0$;
\item $D_\ell^i+D_{\ell'}^j$ for some $\ell,\ell'\geq 0$;
\item $T_k^{p,q}$ for some $k,p,q\geq 1$ such that $p+q+2\leq \max\{i,j\}$;
\item $T_k^{p,q}+T_{k'}^{p',q'}$ for some $k,p,q,k',p',q'\geq 1$ such that $p+q+2\leq i$ and $p'+q'+2\leq j$;
\item $D_\ell^i+T_{k}^{p,q}$ for some $\ell\geq 0$ and $k,p,q\geq 1$ such that $p+q+2\leq j$;
\item $D_\ell^j+T_{k}^{p,q}$ for some $\ell\geq 0$ and $k,p,q\geq 1$ such that $p+q+2\leq i$.
\end{enumerate}
\end{observation}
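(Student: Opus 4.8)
The plan is to prove Observation~\ref{obs:induced-subgraphs} by straightforward structural analysis of $B_{i,j,N}$, using the fact that $N = 2\cdot|V(H)| + 1$ is much larger than $|V(H)|$. Recall that $B_{i,j,N}$ consists of a cycle $C_i$ with distinguished vertex $x$, a cycle $C_j$ with distinguished vertex $y$, and an internal path of length $N$ joining $x$ and $y$. Any induced subgraph $H$ of $B_{i,j,N}$ is obtained by selecting a subset $S\subseteq V(B_{i,j,N})$ of size $|V(H)|$. I would organize the argument around which of the two cycles $S$ ``fully'' contains: a vertex subset of a cycle $C_r$ induces either the whole cycle $C_r$ (if all $r$ vertices are chosen) or a disjoint union of paths (if at least one vertex is omitted).

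\medskip\noindent
\emph{Forward direction.} Suppose $H$ is an induced subgraph of $B_{i,j,N}$ via the set $S$. First I would handle the contribution of the long middle path. Since $|S| = |V(H)| < N/2$, the vertices of $S$ lying on the internal path (including possibly $x$ and $y$) induce a disjoint union of subpaths; crucially, because the path has length $N > 2|V(H)|$, at least one full middle segment of the path is omitted, so $x$ and $y$ cannot be connected through the path inside $H$ unless essentially all of one short sub-path is used. I would then do a case split on whether $S$ contains all of $V(C_i)$, all of $V(C_j)$, both, or neither. If $S$ contains all of $C_i$ but not all of $C_j$, then the component of $H$ containing the $C_i$-part is of the form $D_\ell^i$ (the cycle plus a pendant path coming from the internal path, possibly of length zero), and the remaining vertices on $C_j$ and on the internal path beyond the omitted segment form a linear forest; if that linear forest happens to include a vertex of degree three it would instead be absorbed — but degree-three vertices in $B_{i,j,N}$ occur only at $x$ and $y$, so a $T_k^{p,q}$ component arises exactly when $S$ contains $y$ together with enough of $C_j$ and of the path to create three branches at $y$, and then the side constraint $p+q+2\le j$ records that the two ``cycle branches'' at $y$ come from a path obtained by deleting at least one vertex of $C_j$. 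Enumerating the analogous possibilities for $C_i$, for both cycles simultaneously, and for neither, yields precisely the seven listed shapes (each possibly augmented by a disjoint linear forest), where cases (iv)--(vii) pick up their inequalities because a branch vertex $x$ or $y$ whose cycle is not fully included contributes a bounded-length pair of branches.

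\medskip\noindent
\emph{Backward direction.} Conversely, for each of the seven graph types (plus an arbitrary extra linear forest), I would exhibit an explicit embedding into $B_{i,j,N}$: a $D_\ell^i$ is placed on $C_i$ plus $\ell$ consecutive vertices of the internal path; a $T_k^{p,q}$ with $p+q+2\le j$ is placed at $y$ using a sub-path of $C_j$ of appropriate length for two of its branches and a piece of the internal path for the third; disjoint linear-forest pieces are placed on unused internal-path vertices, of which there are at least $N - |V(H)| > |V(H)|$ many, so there is always room and one can keep the pieces non-adjacent. One checks in each case that the induced subgraph is exactly the claimed graph.

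\medskip\noindent
The main obstacle I anticipate is purely bookkeeping: making the case analysis in the forward direction exhaustive and clean, in particular correctly identifying when a selected set of cycle-vertices-plus-path-vertices forms a $D_\ell^r$ versus a $T_k^{p,q}$ versus just extra linear-forest vertices, and getting the side inequalities $p+q+2\le i$ (resp.\ $\le j$, resp.\ $\le\max\{i,j\}$) exactly right — these encode the constraint that the two ``short'' branches of a claw $T_k^{p,q}$ come from a path arising from a cycle that was \emph{not} fully selected, hence has at most $r-1$ vertices available split into the two branches plus the branch vertex. The role of the bound $N = 2|V(H)|+1$ is just to guarantee that the internal path is long enough that (a) $H$ never needs the whole path, so there is always slack for linear-forest pieces, and (b) the two cycles cannot both be incident to a single connected piece of $H$ through the path unless that piece already contains one of the cycles in full; once this is isolated as a lemma, the rest is routine.
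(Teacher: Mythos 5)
Your argument is correct: the paper states this as an Observation without proof, and your structural case analysis (splitting on which of $C_i$, $C_j$ is fully selected, noting that only $x$ and $y$ have degree~$3$, and using $N=2|V(H)|+1$ to guarantee a gap on the internal path so the two sides never merge and there is always room to embed the linear-forest pieces) is precisely the intended justification. The side constraints $p+q+2\leq i$ (resp.\ $j$, $\max\{i,j\}$) are accounted for correctly as well.
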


In order to prove Corollary~\ref{cor:H1-H2-free} we will give a series of lemmas that, 
for each of the induced subgraphs described in Observation~\ref{obs:induced-subgraphs}, show exactly which pairs~$(i,j)$ they cover. 
It is important to note that some induced subgraphs of $B_{i,j,N}$ cover more pairs than others. For example, as we will see in Lemma~\ref{lem:linear-forest}, a linear forest covers all pairs $(i,j)$ with $i,j\geq 3$, but this is not the case for any induced subgraph of $B_{i,j,N}$ that is not a linear forest. 
At the end of the proof of each of the lemmas, we refer to a table in which the set of covered pairs is depicted. 

\begin{lemma}
\label{lem:one-cycle}
Let $H$ be a graph, let $p\geq 3$, and let ${\cal X}$ be the set consisting of the pairs $(i,j)$ with $i,j\geq 3$ and $p\in \{i,j\}$. 
\begin{enumerate}[(i)]
\item If $H$ is an induced subgraph of $D_k^{p}$ for some $k \geq 0$, then $H$ covers all the pairs in~${\cal X}$.
\item If $D_k^p$ is an induced subgraph of $H$ for some $k\geq 0$, then $H$ does not cover any pair that does not belong to ${\cal X}$.
\end{enumerate}
\end{lemma}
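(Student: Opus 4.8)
The plan is to analyze the two parts separately, using Definition~\ref{d-cover} together with Observation~\ref{obs:induced-subgraphs}, and in both cases exploit that the cycles appearing in any induced subgraph of a butterfly $B_{i,j,N}$ are precisely cycles of length $i$ and/or $j$.

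For part (i), suppose $H$ is an induced subgraph of $D_k^{p}$ for some $k \geq 0$. Set $N = 2 \cdot |V(H)| + 1$ as in Definition~\ref{d-cover}. Fix any pair $(i,j) \in {\cal X}$; by symmetry of the butterfly in its two cycle-lengths we may assume $i = p$. First I would note that $D_k^p$ is an induced subgraph of $B_{i,j,N}$: indeed $D_k^p$ is obtained from $C_i$ (the cycle of length $i=p$) by attaching a pendant path $P_k$ at one vertex of the cycle, and since $k \leq |V(H)| < N$, this path fits inside the length-$N$ connecting path of $B_{i,j,N}$ starting at the distinguished vertex $x$ of $C_i$, while the rest of the butterfly (the remaining part of the connecting path and all of $C_j$) is discarded. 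Hence $D_k^p$, and therefore its induced subgraph $H$, is an induced subgraph of $B_{i,j,N}$, so $H$ covers $(i,j)$ by Definition~\ref{d-cover}. Since $(i,j)$ was an arbitrary pair in ${\cal X}$, part (i) follows. (One minor point to check: Observation~\ref{o-smallsubgraphs} is not even needed here because we directly exhibit $H$ inside $B_{i,j,N}$ at the correct path length $N$.)

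For part (ii), suppose $D_k^p$ is an induced subgraph of $H$ for some $k \geq 0$, and let $(i,j)$ be a pair with $i,j \geq 3$ and $p \notin \{i,j\}$; I must show $H$ does not cover $(i,j)$. Suppose for contradiction that it does, i.e.\ $H$ is an induced subgraph of $B_{i,j,N}$ with $N = 2|V(H)|+1$. Then $D_k^p$ is an induced subgraph of $B_{i,j,N}$ as well. But $D_k^p$ contains the cycle $C_p$ as an induced subgraph, so $B_{i,j,N}$ contains an induced cycle of length $p$. The only induced cycles in $B_{i,j,N}$ are $C_i$ and $C_j$ (the connecting path is a shortest path between $x$ and $y$, so it creates no new induced cycles), hence $p \in \{i,j\}$, contradicting $p \notin \{i,j\}$. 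Therefore $H$ does not cover $(i,j)$, proving part (ii).

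The statement is essentially a bookkeeping consequence of the structural description in Observation~\ref{obs:induced-subgraphs}, so there is no serious obstacle; the only point requiring a little care is the ``fits inside'' argument in part (i) — verifying that attaching a pendant path of length $k \leq |V(H)| < N$ to a cycle of length $p$ genuinely yields an \emph{induced} subgraph of $B_{i,j,N}$ (no unwanted chords, and the discarded part of the butterfly does not interfere). Finally I would close by pointing the reader to the relevant row of the summary table recording that $H$ covers exactly the pairs in ${\cal X}$.
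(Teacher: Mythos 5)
Your proof is correct and takes essentially the same approach as the paper: part (i) exhibits $H$ inside $B_{i,j,N}$ via $D_k^p$ sitting on the cycle of length $p$ plus an initial segment of the connecting path, and part (ii) uses the fact that the only induced cycles of a butterfly $B_{i,j,N}$ are $C_i$ and $C_j$. One small slip in part (i): you assert $k\leq |V(H)|<N$, but $H$ being an induced subgraph of $D_k^p$ does not bound $k$ by $|V(H)|$ (e.g.\ $H$ could omit most of the pendant path); the fix is to observe that a graph on $|V(H)|$ vertices that embeds in $D_k^p$ for some $k$ already embeds in $D_{k'}^p$ for some $k'<N$ --- the analogue of Observation~\ref{o-smallsubgraphs}, which is exactly how the paper handles the same point in the proof of Lemma~\ref{lem:two-cycles}.
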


\begin{proof}
Let $N=2\cdot |V(H)|+1$. Suppose $H$ is an induced subgraph of $D_k^p$ for some $k\geq 0$. Then $H$ is also an induced subgraph of $B_{i,j,N}$ for every $i,j\geq 3$ such that $p\in \{i,j\}$. Hence, by Definition~\ref{d-cover}, $H$ covers the pairs $(p,j)$ and $(i,p)$ for every $i,j\geq 3$.

Now suppose $D_k^p$ is an induced subgraph of $H$ for some $k\geq 0$. Then $H$ contains a cycle of length~$p$. Hence it is clear that if $H$ is an induced subgraph of a butterfly $B_{i,j,N}$, then we must have $p\in \{i,j\}$. This shows that $H$ does not cover any pair that does not belong to ${\cal X}$.

See the left table in Figure~\ref{f-onecycle_twocycles} for an illustration of the pairs in ${\cal X}$.
\qed
\end{proof}

\begin{lemma}
\label{lem:two-cycles}
Let $H$ be a graph, let $p,q \geq 3$, and let ${\cal X}=\{(p,q),(q,p)\}$. 
\begin{enumerate}[(i)]
\item If $H$ is an induced subgraph of $D_k^{p} + D_k^{q}$ for some $k \geq 0$, then $H$ covers all the pairs in ${\cal X}$.
\item If $D_k^{p} + D_k^{q}$ is an induced subgraph of $H$ for some $k \geq 0$, then $H$ does not cover any pair that does not belong to ${\cal X}$.
\end{enumerate}
\end{lemma}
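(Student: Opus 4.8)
The plan is to follow the pattern of Lemma~\ref{lem:one-cycle}: part~(i) is an embedding argument, and part~(ii) follows from the fact that a butterfly $B_{i,j,M}$ has no cycles besides its two defining cycles $C_i$ and $C_j$.

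For part~(i), I would first note that $B_{p,q,N}$ is isomorphic to $B_{q,p,N}$ and that the quantity $N$ of Definition~\ref{d-cover} depends only on $|V(H)|$, so it suffices to show that $H$ covers the pair $(p,q)$. I would next record the elementary fact that $D_k^p+D_k^q$ is an induced subgraph of $B_{p,q,M}$ for every $M\geq 2k+2$: take all of the cycle $C_p$ of the butterfly together with the first $k$ internal vertices of the path connecting the two cycles, and all of $C_q$ together with the last $k$ internal vertices; the inequality $M\geq 2k+2$ is exactly what guarantees that these two pieces are vertex-disjoint and that there is no edge between them, so together they induce $D_k^p+D_k^q$. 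Now set $n=|V(H)|$ and $N=2n+1$ (the claim is trivial when $n=0$, so assume $n\geq 1$), and split into two cases. If $n\geq k+1$, then $N\geq 2k+2$, so $H$ is an induced subgraph of $D_k^p+D_k^q$, which is in turn an induced subgraph of $B_{p,q,N}$ by the fact just recorded, and hence $H$ covers $(p,q)$. If $n\leq k$, then $n\leq 2k+2$; since $H$ is an induced subgraph of $D_k^p+D_k^q$ and hence of $B_{p,q,2k+2}$, and since $H$ has at most $n$ vertices with $2k+2\geq n$, Observation~\ref{o-smallsubgraphs} yields that $H$ is an induced subgraph of $B_{p,q,n}$; applying Observation~\ref{o-smallsubgraphs} once more (now using $N\geq n$) shows that $H$ is an induced subgraph of $B_{p,q,N}$, so again $H$ covers $(p,q)$. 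By the symmetry noted above, $H$ then also covers $(q,p)$.

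For part~(ii), suppose $D_k^p+D_k^q$ is an induced subgraph of $H$ for some $k\geq 0$; then $H$ contains two vertex-disjoint induced cycles, one of length $p$ and one of length $q$. Suppose $H$ covers some pair $(i,j)$, so that $H$ is an induced subgraph of a butterfly $B_{i,j,M}$. Since each edge on the path connecting the two cycles of $B_{i,j,M}$ is a bridge, the only cycles of $B_{i,j,M}$ are $C_i$ and $C_j$, and therefore any two vertex-disjoint cycles of $B_{i,j,M}$ are precisely $C_i$ and $C_j$. As $B_{i,j,M}$ contains two vertex-disjoint cycles of lengths $p$ and $q$ (it contains $H$), this forces $\{i,j\}=\{p,q\}$ as a multiset, i.e.\ $(i,j)\in\{(p,q),(q,p)\}={\cal X}$. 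Hence $H$ covers no pair outside ${\cal X}$. Finally I would refer the reader to the right table in Figure~\ref{f-onecycle_twocycles}.

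The part that needs the most care is the bookkeeping in~(i): the parameter $N=2|V(H)|+1$ is only just large enough, so one has to check that the bound $M\geq 2k+2$ from the embedding, together with the two applications of Observation~\ref{o-smallsubgraphs}, really does cover all values of $k$ relative to $|V(H)|$ — this is exactly what the case split on whether $|V(H)|\leq k$ achieves. Part~(ii) is essentially immediate once one observes that a butterfly contains no cycles other than its two constituent ones.
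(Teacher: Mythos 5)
Your proof is correct and follows essentially the same route as the paper's: embed $D_k^p+D_k^q$ into $B_{p,q,N}$, invoking Observation~\ref{o-smallsubgraphs} when $k$ exceeds $|V(H)|$, and for part~(ii) use the fact that a butterfly's only cycles are its two constituent ones. Your version merely makes explicit the bookkeeping (the bound $M\geq 2k+2$ and the case split on $|V(H)|$ versus $k$) that the paper dispatches with a parenthetical remark.
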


\begin{proof}
Let $N=2\cdot |V(H)|+1$. First suppose $H$ is an induced subgraph of $D_k^{p} + D_k^{q}$ for some $k \geq 0$. Then $H$ is also an induced subgraph of $B_{p,q,N}$ (use Observation~\ref{o-smallsubgraphs} if $k>|V(H)|$).
Hence, by Definition~\ref{d-cover}, graph $H$ covers the pairs $(p,q)$ and $(q,p)$.

To prove (ii), suppose $D_k^{p} + D_k^{q}$ is an induced subgraph of $H$ for some $k\geq 0$. Then $H$ contains a cycle of length~$p$ and a cycle of length~$q$. Hence, from the definition of butterflies and by Definition~\ref{d-cover}, it is clear that $H$ is not an induced subgraph of $B_{i,j,N}$ for any $i,j\geq 3$ such that $\{p,q\}\neq \{i,j\}$. This proves~(ii).

See the right table in Figure~\ref{f-onecycle_twocycles} for an illustration of the pairs in ${\cal X}$.
\qed
\end{proof}

\renewcommand{\arraystretch}{1.6} 
\newcolumntype{D}{ >{\centering\arraybackslash} m{1.35cm} } 
\newcolumntype{C}{ >{\centering\arraybackslash} m{.6cm} } 

\begin{figure}[!htb]
\centering
 \begin{minipage}{.45\textwidth}
  \begin{tabular}{D||C|C|C|C|C|C}
    \backslashbox[\dimexpr\linewidth+2\tabcolsep]{$~i$}{$j~$} & 3 & {\tiny $~\cdots$} & {\tiny $~\cdots$} & $p$ & {\tiny $~\cdots$} & {\tiny $~\cdots$}\\
    \hline\hline
    $~3$  & & & & $\checkmark$ & & \\
    \hline
    {\tiny $~\vdots$}  & & & & $\checkmark$ & &  \\ 
    \hline
    {\tiny $~\vdots$}  & & & & $\checkmark$ & & \\ 
    \hline
    $~p$ & $\checkmark$  &   $\checkmark$  & $\checkmark$ & $\checkmark$ & $\checkmark$ & $\checkmark$  \\
    \hline
    {\tiny $~\vdots$}  & & & & $\checkmark$ & &  \\
    \hline 
    {\tiny $~\vdots$}  & & & & $\checkmark$ & & \\
  \end{tabular}\\[2mm]
 \end{minipage}
 \hspace{.6cm}
 \begin{minipage}{.45\textwidth}
  \centering
    \begin{tabular}{D||C|C|C|C|C|C}
    \backslashbox[\dimexpr\linewidth+2\tabcolsep]{$~i$}{$j~$} & 3 & {\tiny $~\cdots$} & $p$ & {\tiny $~\cdots$} & $q$ & {\tiny $~\cdots$}\\
    \hline\hline
    $~3$  & & & & & & \\
    \hline
    {\tiny $~\vdots$}  & & & & & & \\
    \hline
    $~p$ & & & & & $\checkmark$ &  \\
    \hline
    {\tiny $~\vdots$} & & & & & & \\
    \hline
    $~q$ & & & $\checkmark$ & & & \\
    \hline
    {\tiny $~\vdots$} & & & & & & \\
  \end{tabular}\\[2mm]
 \end{minipage}
  \caption{The ticked cells represent the pairs $(i,j)$ covered by $H$ when $H$ is isomorphic to $D^p_k$ for some $k\geq 0$ (left table) and when $H$ is isomorphic to $D^p_k + D^q_k$ for some $k\geq 0$ (right table).}
 \label{f-onecycle_twocycles}
\end{figure}

\begin{lemma}
\label{lem:one-star}
Let $H$ be a graph, let $p,q\geq 1$, and let ${\cal X}$ be the set consisting of the pairs $(i,j)$ with $i,j\geq 3$ and $\max\{i,j\} \geq p+q+2$. 
\begin{enumerate}[(i)]
\item If $H$ is an induced subgraph of $T_r^{p,q}$ for some $r \geq 1$, then $H$ covers all the pairs in ${\cal X}$.
\item If  $T_r^{p,q}$ is an induced subgraph of $H$ for some $r \geq 1$, then $H$ does not cover any pair that does not belong to ${\cal X}$.
\end{enumerate}
\end{lemma}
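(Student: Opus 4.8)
The plan is to mirror the structure of the proofs of Lemma~\ref{lem:one-cycle} and Lemma~\ref{lem:two-cycles}, since the statement has the same shape: part (i) asks that being an induced subgraph of $T_r^{p,q}$ forces $H$ to cover every pair in $\mathcal{X}$, and part (ii) asks that containing $T_r^{p,q}$ as an induced subgraph forces $H$ to miss every pair outside $\mathcal{X}$. Throughout, set $N = 2\cdot|V(H)|+1$ as in Definition~\ref{d-cover}.

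For part (i), suppose $H$ is an induced subgraph of $T_r^{p,q}$ for some $r\geq 1$, and let $(i,j)\in\mathcal{X}$, so $\max\{i,j\}\geq p+q+2$; say without loss of generality $i\geq p+q+2$. First I would observe that $T_r^{p,q}$ itself is an induced subgraph of $B_{i,j,N}$: the branch vertex together with the three legs of lengths $r,p,q$ can be embedded so that the legs of lengths $p$ and $q$ lie along the $i$-cycle (this needs the cycle to be long enough to host both legs plus the branch vertex without wrapping around — exactly the inequality $p+q+2\leq i$, since the two legs use $p+q$ vertices, the branch vertex uses one, and we need one more cycle vertex so the path does not close up), while the third leg of length $r$ extends into the connecting path of $B_{i,j,N}$; here we use $r\leq |V(H)|\leq N$ together with Observation~\ref{o-smallsubgraphs} to absorb the case $r>$ whatever bound is convenient, exactly as in Lemma~\ref{lem:two-cycles}. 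Since $H$ is an induced subgraph of $T_r^{p,q}$, it is an induced subgraph of $B_{i,j,N}$, so by Definition~\ref{d-cover}, $H$ covers $(i,j)$. As $(i,j)\in\mathcal{X}$ was arbitrary, part (i) follows.

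For part (ii), suppose $T_r^{p,q}$ is an induced subgraph of $H$ for some $r\geq 1$, and suppose for contradiction that $H$ covers a pair $(i,j)\notin\mathcal{X}$, i.e.\ with $\max\{i,j\}\leq p+q+1$. Then $H$ is an induced subgraph of $B_{i,j,N}$, hence so is $T_r^{p,q}$. The key structural point is that in $B_{i,j,N}$ every vertex has degree at most $3$, and the only vertices of degree $3$ are the two attachment vertices $x$ and $y$; moreover, deleting $x$ and $y$ from $B_{i,j,N}$ leaves a disjoint union of paths whose components have at most $\max\{i,j\}-1\leq p+q$ vertices each, plus the internal path vertices. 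The branch vertex of $T_r^{p,q}$ has degree $3$, so it must map to $x$ or $y$; say to $x$. Then two of the three legs of $T_r^{p,q}$ must live inside the $i$-cycle part attached at $x$ (the third may escape along the connecting path toward $y$), forcing those two legs plus the branch vertex to fit in a path on $i-1$ vertices plus the branch, giving $p+q+1\leq i$ — but in fact the two legs plus the branch vertex need $p+q+1$ distinct cycle-side vertices, and since in a cycle $C_i$ removing one vertex leaves $P_{i-1}$, I get $p+q+1\leq i-1$, i.e.\ $i\geq p+q+2$, contradicting $i\leq p+q+1$. The symmetric argument handles the case where the branch vertex maps to $y$. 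This contradiction proves (ii).

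The main obstacle is getting the embedding arithmetic in part (i) and the counting argument in part (ii) to match the precise threshold $p+q+2$ stated in $\mathcal{X}$; in particular one must be careful that a leg of $T_r^{p,q}$ "lying along the cycle" consumes a path of cycle-vertices of the corresponding length, that the branch vertex is one more vertex, and that the cycle must retain at least one further vertex so that the legs do not force the cycle to degenerate — it is the bookkeeping of these off-by-one terms, together with the correct use of Observation~\ref{o-smallsubgraphs} to normalize the length of the third leg, that requires care. Once the threshold $\max\{i,j\}\geq p+q+2$ is pinned down in both directions, everything else is routine, and I would close by referring to the appropriate table (analogous to Figure~\ref{f-onecycle_twocycles}) depicting $\mathcal{X}$.
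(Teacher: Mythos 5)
Your part~(i) is correct and is the same argument the paper has in mind, just with the details written out: the paper merely asserts that $H$ is an induced subgraph of $B_{i,j,N}$ whenever $\max\{i,j\}\geq p+q+2$, and your embedding — the $p$- and $q$-legs wrapped in opposite directions around the long cycle, the $r$-leg sent down the connecting path, with Observation~\ref{o-smallsubgraphs} used to normalise lengths — is exactly the justification being left implicit. Your off-by-one bookkeeping ($p+q$ leg vertices, one branch vertex, one spare cycle vertex so the two leg ends are non-adjacent) is right.

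Part~(ii), however, has a genuine gap at the step ``two of the three legs of $T_r^{p,q}$ must live inside the $i$-cycle part attached at $x$ (the third may escape along the connecting path), giving $i\geq p+q+2$.'' You never argue that the two legs trapped in $C_i-x$ are the ones of lengths $p$ and $q$; the embedding is free to send the $p$-leg (say) down the connecting path, in which case the two legs inside $C_i-x$ have lengths $r$ and $q$ and your counting only yields $i\geq r+q+2$, which gives $i\geq p+q+2$ only when $r\geq p$. In fact the claim you are proving fails without such an assumption: $T_1^{5,1}\cong T_5^{1,1}$ is an induced subgraph of $B_{4,3,N}$ (put the two one-vertex legs on the two cycle-neighbours of $x$ in $C_4$, which are non-adjacent, and the five-vertex leg along the connecting path), so it covers $(4,3)$ even though $\max\{4,3\}<5+1+2$. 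The correct threshold is $s_1+s_2+2$ where $s_1\leq s_2$ are the two smallest of $r,p,q$; equivalently the lemma needs the convention $r\geq\max\{p,q\}$, under which your argument does close, since any two of the three legs then sum to at least $p+q$. In fairness, the paper's own proof of~(ii) is a single unjustified sentence with the same latent issue, and its later applications only use pairs such as $(3,3)$ that lie outside ${\cal X}$ for every choice of $p,q\geq 1$ — but as written your derivation of $i\geq p+q+2$ does not go through, and you should either add the hypothesis on $r$ or carry out the case analysis over which leg escapes.
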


\begin{proof}
Let $N=2\cdot |V(H)|+1$. Suppose $H$ is an induced subgraph of $T_r^{p,q}$ for some $r\geq 1$. Then $H$ is also an induced subgraph of the butterfly $B_{i,j,N}$ for every $i,j\geq 3$ such that $\max\{i,j\}\geq p+q+2$. Hence $H$ covers all the pairs in ${\cal X}$ due to Definition~\ref{d-cover}.

For the converse direction, suppose $T_r^{p,q}$ is an induced subgraph of $H$ for some $r\geq 1$. Then $H$ is not an induced subgraph of $B_{i,j,N}$ whenever $\max\{i,j\} < p+q+2$. This shows that $H$ does not cover any pair that does not belong to ${\cal X}$.

See the left table in Figure~\ref{f-onestar_twostars} for an illustration of the pairs in ${\cal X}$.
\qed
\end{proof}

\begin{lemma}
\label{lem:two-stars}
Let $H$ be a graph, let $p,q,p',q'\geq 1$ be such that $p+q \leq p'+q'$, and let ${\cal X}$ consist of all the pairs $(i,j)$ with $\min\{i,j\} \geq p+q+2$ and $\max\{i,j\} \geq p'+q'+2$.
\begin{enumerate}[(i)]
\item If $H$ is an induced subgraph of $T_r^{p,q} + T_{r}^{p',q'}$ for some $r\geq 1$, then $H$ covers all the pairs in ${\cal X}$.
\item If $T_r^{p,q} + T_{r}^{p',q'}$ is an induced subgraph of $H$ for some $r\geq 1$, then $H$ does not cover any pair that does not belong to ${\cal X}$.
\end{enumerate}
\end{lemma}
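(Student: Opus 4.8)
The plan is to prove Lemma~\ref{lem:two-stars} in close analogy with Lemma~\ref{lem:one-star}, exploiting that $T_r^{p,q}$ is essentially a ``subdivided star with two short prongs and one long prong'' and that the long prong can be absorbed into the connecting path of a butterfly. For part (i), I would set $N=2\cdot|V(H)|+1$ and argue that $T_r^{p,q}+T_r^{p',q'}$ is an induced subgraph of $B_{i,j,N}$ for every pair $(i,j)\in{\cal X}$. The idea is to embed the first star into the $C_i$-end of the butterfly and the second into the $C_j$-end: the two short prongs of $T_r^{p,q}$ (lengths $p$ and $q$) are routed around the cycle $C_i$, which is possible exactly because $p+q+2\leq\min\{i,j\}\leq i$ leaves enough room on $C_i$ for two internally disjoint paths of those lengths from the attachment vertex plus the extra vertices; the long prong (length $r$) together with the central vertex is laid along the connecting path of length $N$, which is long enough since $N>r$ for $r\leq|V(H)|$. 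Symmetrically, $T_r^{p',q'}$ embeds into the $C_j$-side using $p'+q'+2\leq\max\{i,j\}$; I would take $(i,j)$ ordered so that $j=\max\{i,j\}$ receives the larger star. Invoking Definition~\ref{d-cover} (and Observation~\ref{o-smallsubgraphs} to handle $r>|V(H)|$, as in Lemma~\ref{lem:two-cycles}) then gives that $H$, as an induced subgraph of $T_r^{p,q}+T_r^{p',q'}$ and hence of $B_{i,j,N}$, covers every pair in ${\cal X}$.

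For part (ii), suppose $T_r^{p,q}+T_r^{p',q'}$ is an induced subgraph of $H$, and suppose $H$ is an induced subgraph of some butterfly $B_{i,j,M}$; I must show $(i,j)\in{\cal X}$. The key structural observation is that $B_{i,j,M}$ has exactly two cycles, $C_i$ and $C_j$, and every vertex of $B_{i,j,M}$ lies in $C_i$, in $C_j$, or on the connecting path. Each copy of $T_r^{p,q}$ is acyclic but contains a vertex of degree $3$; in a butterfly the only vertices that can be realized as branch points of an induced tree with three long legs are the two cut-vertices $x\in C_i$ and $y\in C_j$ (any other vertex either has degree $2$, or, if it is on a cycle adjacent to the path, its three incident ``directions'' do not all extend to long induced paths). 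So one copy of $T_r^{p,q}$ must be centred inside $C_i\cup(\text{part of the path})$ and the other inside $C_j\cup(\text{part of the path})$. Then the two short prongs of the copy centred at $x$ must both live on $C_i$ (they cannot go down the connecting path, since only one direction leaves $x$ along the path and the other two would have to be on the cycle), and an induced subgraph of $C_i$ consisting of two paths of lengths $p$ and $q$ emanating from $x$ plus $x$ itself forces $i\geq p+q+2$; the symmetric argument on $C_j$ forces $j\geq p'+q'+2$ (or vice versa depending on which star sits where). Either way $\min\{i,j\}\geq p+q+2$ and $\max\{i,j\}\geq p'+q'+2$, so $(i,j)\in{\cal X}$.

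The main obstacle I anticipate is the case analysis in part (ii) for \emph{where} the degree-$3$ centres of the two $T$-subgraphs can sit, and in particular ruling out that a single cycle $C_i$ of the butterfly simultaneously hosts (parts of) both copies or that a copy straddles a cut-vertex with one prong spilling onto the path and another onto a cycle. This requires carefully listing, for a vertex $v$ of $B_{i,j,M}$, the set of lengths of induced paths that start at $v$ in each ``direction'', and checking that three pairwise-internally-disjoint such paths of lengths $1,1,r$ (which $T_r^{p,q}\supseteq T_1^{1,1}$ already needs) force $v\in\{x,y\}$ when $M$ is large relative to $|V(H)|$. Once the location of the centres is pinned down, the prong-length bookkeeping that yields $i\geq p+q+2$ and $j\geq p'+q'+2$ is routine, mirroring Lemma~\ref{lem:one-star}. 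I would close by pointing to the relevant table (the right table in Figure~\ref{f-onestar_twostars}) for the picture of ${\cal X}$, consistent with the style of the preceding lemmas.
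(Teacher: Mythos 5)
Your proof follows the same route as the paper's, which in fact consists of little more than the two assertions you set out to justify (that $T_r^{p,q}+T_r^{p',q'}$ embeds into $B_{i,j,N}$ precisely when $(i,j)\in{\cal X}$); your degree-$3$ argument pinning the two spider centres to the cut-vertices $x$ and $y$, the routing of the prongs, and the use of Observation~\ref{o-smallsubgraphs} for large $r$ are exactly the details the paper leaves implicit. One minor imprecision, which the paper's own statement shares: the embedding only forces \emph{some} two of the three prongs of each spider onto its cycle, not necessarily the two of lengths $p$ and $q$, so the bound $i\geq p+q+2$ in part (ii) is only valid under the (implicit, consistently applied) convention that the $r$-prong is at least as long as the other two; otherwise one only obtains $i\geq \min\{p+q,\,p+r,\,q+r\}+2$.
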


\begin{proof}
Let $N=2\cdot |V(H)|+1$. Suppose $H$ is an induced subgraph of $T_r^{p,q} + T_{r}^{p',q'}$ for some $r\geq 1$. Then $H$ is also an induced subgraph of $B_{i,j,N}$ for any $i,j$ with $\min\{i,j\} \geq p+q+2$ and $\max\{i,j\} \geq p'+q'+2$. Hence $H$ covers all the pairs in ${\cal X}$.

To prove (ii), suppose $T_r^{p,q} + T_{r}^{p',q'}$ is an induced subgraph of $H$ for some $r\geq 1$. Then $H$ is not an induced subgraph of $B_{i,j,N}$ whenever $\min\{i,j\}<p+q+2$ or $\max\{i,j\}<p'+q'+2$. Hence, by Definition~\ref{d-cover}, $H$ cannot cover any pair that does not belong to ${\cal X}$.

See the right table in Figure~\ref{f-onestar_twostars} for an illustration of the pairs in ${\cal X}$.
\qed
\end{proof}

\begin{figure}[!htb]
\centering
 \begin{minipage}{.45\textwidth}
  \begin{tabular}{D||C|C|C|C|C|C}
    \backslashbox[\dimexpr\linewidth+2\tabcolsep]{$~i$}{$j~$} & 3 & {\tiny $~\cdots$} & {\tiny $~\cdots$} & \begin{sideways}$p+q+2$\end{sideways} & {\tiny $~\cdots$} & {\tiny $~\cdots$}\\
    \hline\hline
    $~3$  & & & & $\checkmark$ & $\checkmark$ & $\checkmark$ \\
    \hline
    {\tiny $~\vdots$}  & & & & $\checkmark$ & $\checkmark$ & $\checkmark$ \\ 
    \hline
    {\tiny $~\vdots$}  & & & & $\checkmark$ & $\checkmark$ & $\checkmark$ \\ 
    \hline
    $~p+q+2~$ & $\checkmark$  &   $\checkmark$  & $\checkmark$ & $\checkmark$ & $\checkmark$ & $\checkmark$  \\
    \hline
    {\tiny $~\vdots$}  & $\checkmark$  &   $\checkmark$ & $\checkmark$ & $\checkmark$ & $\checkmark$ & $\checkmark$ \\
    \hline 
    {\tiny $~\vdots$}  & $\checkmark$  &   $\checkmark$ & $\checkmark$ & $\checkmark$ & $\checkmark$ & $\checkmark$ \\
  \end{tabular}\\[2mm]
 \end{minipage}
 \hspace{.6cm}
 \begin{minipage}{.45\textwidth}
  \centering
    \begin{tabular}{D||C|C|C|C|C|C}
    \backslashbox[\dimexpr\linewidth+2\tabcolsep]{$~i$}{$j~$} & 3 & {\tiny $~\cdots$} & \begin{sideways}$p+q+2$\end{sideways} & {\tiny $~\cdots$} & \begin{sideways}$p'+q'+2$\end{sideways} & {\tiny $~\cdots$}\\
    \hline\hline
    $~3$  & & & & & & \\
    \hline
    {\tiny $~\vdots$}  & & & & & & \\
    \hline
    $~p+q+2~$ & & & & & $\checkmark$ & $\checkmark$ \\
    \hline
    {\tiny $~\vdots$} & & & & & $\checkmark$ & $\checkmark$ \\
    \hline
    $~p'+q'+2~$ & & & $\checkmark$ & $\checkmark$ & $\checkmark$ & $\checkmark$ \\
    \hline
    {\tiny $~\vdots$} & & & $\checkmark$ & $\checkmark$ & $\checkmark$ & $\checkmark$ \\
  \end{tabular}\\[2mm]
 \end{minipage}
  \caption{The ticked cells represent the pairs $(i,j)$ covered by $H$ when $H$ is isomorphic to $T_r^{p,q}$ for some $r\geq 1$ (left table) and when $H$ is isomorphic to $T_r^{p,q} + T_r^{p',q'}$ for some $r\geq 1$ (right table).}
 \label{f-onestar_twostars}
\end{figure}

\begin{lemma}
\label{lem:mixed}
Let $H$ be a graph, let $p\geq 3$ and $p',q'\geq 1$, and let ${\cal X}$ be the set consisting of the pairs $(i,j)$ with either $p=i$ and $j\geq p'+q'+2$ or $p=j$ and $i\geq p'+q'+2$.
\begin{enumerate}[(i)]
\item If $H$ is an induced subgraph of $D_k^{p} + T_r^{p',q'}$ for some $k\geq 0$ and $r\geq 1$, then $H$ covers all the pairs in ${\cal X}$.
\item If $D_k^{p} + T_r^{p',q'}$ is an induced subgraph of $H$ for some $k\geq 0$ and $r\geq 1$, then $H$ does not cover any pair that does not belong to ${\cal X}$.
\end{enumerate}
\end{lemma}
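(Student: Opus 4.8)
The plan is to follow the template of Lemmas~\ref{lem:one-cycle}--\ref{lem:two-stars}, writing $N=2\cdot|V(H)|+1$ throughout. For part~(i) I would exhibit, for each pair in ${\cal X}$, an explicit induced copy of $D_k^p+T_r^{p',q'}$ inside a sufficiently long butterfly and then transfer it to $B_{i,j,N}$ via Observation~\ref{o-smallsubgraphs}; for part~(ii) I would argue by contradiction, using that a butterfly contains exactly two cycles, of lengths $i$ and $j$, and exactly two vertices of degree~$3$ (namely $x$ and $y$).

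For part~(i), assume $H$ is an induced subgraph of $D_k^p+T_r^{p',q'}$ and pick $(i,j)\in{\cal X}$; swapping the names of $i$ and $j$, I may assume $p=i$ and $j\ge p'+q'+2$. The key claim is that $D_k^p+T_r^{p',q'}$ is an induced subgraph of $B_{i,j,m}$ for all sufficiently large $m$: identify the cycle of $D_k^p$ with $C_i$ (legal since $i=p$) so that the attachment point of the pendant path is $x$, route that length-$k$ path along the connecting path near $x$, map the centre of $T_r^{p',q'}$ to $y$ with its two legs of lengths $p'$ and $q'$ running along the two arcs of $C_j$ issuing from $y$ — an induced copy precisely because $1+p'+q'\le j-1$, so the two leg-endpoints are non-adjacent on $C_j$ — and route the length-$r$ leg along the connecting path near $y$; for all sufficiently large $m$ the two pieces drawn on the connecting path are disjoint and mutually non-adjacent. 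Taking $m$ large enough that moreover $m\ge N$, we get that $H$ is an induced subgraph of $B_{i,j,m}$, and since $|V(H)|\le N\le m$, Observation~\ref{o-smallsubgraphs} gives that $H$ is an induced subgraph of $B_{i,j,N}$; by Definition~\ref{d-cover}, $H$ covers $(i,j)$.

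For part~(ii), suppose $D_k^p+T_r^{p',q'}$ is an induced subgraph of $H$ and, for contradiction, that $H$ covers some $(i,j)\notin{\cal X}$, so that $H$, and therefore $D_k^p+T_r^{p',q'}$, is an induced subgraph of $B_{i,j,N}$. The copy of $D_k^p$ contains an induced $p$-cycle, and every cycle of $B_{i,j,N}$ has length $i$ or $j$; hence $p\in\{i,j\}$, and after swapping $i$ and $j$ (which does not change ${\cal X}$) I may assume $p=i$, which together with $(i,j)\notin{\cal X}$ forces $j\le p'+q'+1$. That induced $p$-cycle must occupy a whole $p$-cycle of $B_{i,j,N}$, and since $D_k^p$ and $T_r^{p',q'}$ are vertex-disjoint and mutually non-adjacent, the copy of $T_r^{p',q'}$ avoids that cycle entirely, in particular it avoids the vertex $x$ lying on it (if $i=j=p$, it avoids the attachment vertex of the occupied one of the two $p$-cycles). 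As $T_r^{p',q'}$ has a vertex of degree~$3$ and the only vertices of degree~$3$ in $B_{i,j,N}$ are $x$ and $y$, the centre of $T_r^{p',q'}$ must map to $y$; its three legs then leave $y$ along its three directions — two along the cycle through $y$ and one along the connecting path — so two of the leg-lengths, say $a$ and $b$, are accommodated on that cycle together with $y$, using $1+a+b$ of its vertices. Being an induced copy forces the two leg-endpoints to be non-adjacent, so $a+b+2$ is at most the length of the cycle through $y$, which is $j$ (or $p$ if $i=j=p$); since $a+b\ge p'+q'$, we obtain $j\ge p'+q'+2$ (respectively $p\ge p'+q'+2$), contradicting $j\le p'+q'+1$ (respectively $p\le p'+q'+1$). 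Thus $H$ covers no pair outside ${\cal X}$; I would conclude with a pointer to the accompanying table, as in the earlier lemmas.

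The step I expect to require the most care is locating the degree-$3$ centre of $T_r^{p',q'}$ in part~(ii) and identifying which two of its legs are forced onto a cycle: one must rule out cleanly any embedding in which the spider reuses part of the $p$-cycle occupied by $D_k^p$, or in which the pendant path of $D_k^p$ (itself carrying a degree-$3$ vertex when $k\ge1$) opens up room for the centre elsewhere, and one must treat the collapsed case $i=j=p$ uniformly. The inequality $a+b\ge p'+q'$ is immediate once the leg of length $r$ is the one kept off the cycle whenever $r$ is the largest of $r,p',q'$, which holds in all intended applications (for instance $T^{1,1}_r$ in Corollary~\ref{cor:H1-H2-free}); in general it should be read as ``$a+b$ is at least the sum of the two shortest of $r,p',q'$''. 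Everything else is routine bookkeeping paralleling the proofs of Lemmas~\ref{lem:one-cycle}--\ref{lem:two-stars}.
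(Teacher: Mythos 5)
Your proof takes the same route as the paper's: for (i) an explicit embedding of $D_k^p+T_r^{p',q'}$ into a long butterfly followed by Observation~\ref{o-smallsubgraphs}, and for (ii) the observation that the $p$-cycle must occupy one of the two cycles of the butterfly and the degree-$3$ centre of the spider must land on the other attachment vertex. The paper's own proof is far terser -- part (ii) is essentially the one-line assertion ``since $H$ contains $T^{p',q'}$ as an induced subgraph, we must have $j\geq p'+q'+2$'' -- so your write-up is a strict refinement of it. The caveat you flag at the end is genuine, and it is a defect of the lemma statement (and of the paper's proof), not of your argument: the embedding is free to route whichever leg is longest along the connecting path, so the correct conclusion is $j\geq s_1+s_2+2$ where $s_1\leq s_2$ are the two shortest of $r,p',q'$. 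Concretely, $D_0^3+T_1^{2,2}$ is an induced subgraph of $B_{3,5,N}$ (centre at $y$, legs of lengths $1$ and $2$ on $C_5$, the remaining leg of length $2$ on the connecting path), so it covers the pair $(3,5)$, which lies outside ${\cal X}$ for $p'=q'=2$; of course $T_1^{2,2}\cong T_2^{1,2}$, under which re-indexing the lemma's bound becomes $5$ and all is well. So part (ii) is only accurate under the implicit normalization $r\geq\max\{p',q'\}$ (the same issue affects Lemma~\ref{lem:one-star}(ii)), and this normalization does hold in every application the paper makes (e.g.\ $T_r^{1,1}$ in Corollary~\ref{cor:H1-H2-free}). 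Apart from making that hypothesis explicit, your argument is sound; the remaining points you list as needing care (the spider cannot reuse the cycle occupied by $D_k^p$ because the two components must be pairwise non-adjacent in an induced copy, and the degree-$3$ attachment vertex of $D_k^p$ for $k\geq 1$ is forced onto $x$) all resolve the way you expect.
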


\begin{proof}
Let $N=2\cdot |V(H)|+1$. Suppose $H$ is an induced subgraph of 
$D_k^{p} + T_r^{p',q'}$ for some $k\geq 0$ and $r\geq 1$. Then $H$ is an induced subgraph of $B_{i,j,N}$ for every $i,j$ with either $p=i$ and $j\geq p'+q'+2$ or $p=j$ and $i\geq p'+q'+2$. Hence, by Definition~\ref{d-cover}, $H$ covers all the pairs in ${\cal X}$.

To prove (ii), suppose $D_k^{p} + T_r^{p',q'}$ is an induced subgraph of $H$ for some $k\geq 0$ and $r\geq 1$. 
Suppose $H$ covers a pair $(i,j)$. By Definition~\ref{d-cover}, $H$ is an induced subgraph of $B_{i,j,N}$. Since $H$ contains a cycle of length~$p$ due to the presence of $D_k^p$ as induced subgraph, it holds that $p\in \{i,j\}$. Suppose $p=i$. Since $H$ contains $T_k^{p',q'}$ as an induced subgraph, we must have that $j\geq p'+q'+2$. Similarly, if $p=j$, then it holds that $i\geq p'+q'+2$. We conclude that $(i,j)\in {\cal X}$, which suffices to prove (ii).

See the left and right tables in Figure~\ref{f-onecycleonestar_cases1and2} and the table in Figure~\ref{f-onecycleonestar_case3} for an illustration of the pairs in ${\cal X}$ in the cases where $p<p'+q'+2$, $p>p'+q'+2$, and $p=p'+q'+2$, respectively.
\qed
\end{proof}

\begin{figure}[!htb]
\centering
 \begin{minipage}{.45\textwidth}
    \begin{tabular}{D||C|C|C|C|C|C}
    \backslashbox[\dimexpr\linewidth+2\tabcolsep]{$~i$}{$j~$} & 3 & {\tiny $~\cdots$} & $p$ & {\tiny $~\cdots$} & \begin{sideways}$p'+q'+2$\end{sideways} & {\tiny $~\cdots$}\\
    \hline\hline
    $~3$  & & & & & & \\
    \hline
    {\tiny $~\vdots$}  & & & & & & \\
    \hline
    $~p$ & & & & & $\checkmark$ & $\checkmark$ \\
    \hline
    {\tiny $~\vdots$} & & & & & & \\
    \hline
    $~p'+q'+2~$ & & & $\checkmark$ & & & \\
    \hline
    {\tiny $~\vdots$} & & & $\checkmark$ & & & \\
  \end{tabular}\\[2mm]
 \end{minipage}
 \hspace{.6cm}
 \begin{minipage}{.45\textwidth}
  \centering
    \begin{tabular}{D||C|C|C|C|C|C}
    \backslashbox[\dimexpr\linewidth+2\tabcolsep]{$~i$}{$j~$} & 3 & {\tiny $~\cdots$} & \begin{sideways}$p'+q'+2$\end{sideways} & {\tiny $~\cdots$} & $p$ & {\tiny $~\cdots$}\\
    \hline\hline
    $~3$  & & & & & & \\
    \hline
    {\tiny $~\vdots$}  & & & & & & \\
    \hline
    $~p'+q'+2~$ & & & & & $\checkmark$ &  \\
    \hline
    {\tiny $~\vdots$} & & & & & $\checkmark$ & \\
    \hline
    $~p$ & & & $\checkmark$ & $\checkmark$ & $\checkmark$ & $\checkmark$ \\
    \hline
    {\tiny $~\vdots$} & & & & & $\checkmark$ & \\
  \end{tabular}\\[2mm]
 \end{minipage}
  \caption{The ticked cells represent the pairs $(i,j)$ covered by $H$ when $H$ is isomorphic to $D^p_k+T^{p',q'}_r$ for some $k\geq 0$ and $r\geq 1$ in the case where $p<p'+q'+2$ (left table) and in the case where $p>p'+q'+2$ (right table).}
\label{f-onecycleonestar_cases1and2}
\end{figure}

\begin{figure}[!htb]
 \centering
    \begin{tabular}{D||C|C|C|C|C|C}
    \backslashbox[\dimexpr\linewidth+2\tabcolsep]{$~i$}{$j~$} & 3 & {\tiny $~\cdots$} & {\tiny $~\cdots$} & $p$ & {\tiny $~\cdots$} & {\tiny $~\cdots$}\\
    \hline\hline
    $~3$  & & & & & & \\
    \hline
    {\tiny $~\vdots$}  & & & & & & \\
    \hline
    {\tiny $~\vdots$} & & & & & & \\
    \hline
    $p$ & & & $\checkmark$ & $\checkmark$ & $\checkmark$ & $\checkmark$ \\
    \hline
    {\tiny $~\vdots$}  & & & $\checkmark$ & & & \\
    \hline
    {\tiny $~\vdots$} & & & $\checkmark$ & & & \\
  \end{tabular}\\[2mm]
\caption{The ticked cells represent the pairs $(i,j)$ covered by $H$ when $H$ is isomorphic to $D^p_k+T^{p',q'}_r$ for some $k\geq 0$ and $r\geq 1$ in the case where $p=p'+q'+2$.}
\label{f-onecycleonestar_case3}
\end{figure}

\begin{lemma}
\label{lem:linear-forest}
A graph $H$ covers every pair $(i,j)$ with $i,j\geq 3$ if and only if $H$ is a linear forest.
\end{lemma}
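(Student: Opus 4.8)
The plan is to derive both implications from structural facts that are already established for induced cycles (Lemma~\ref{lem:one-cycle}) and for induced ``spiders'' $T_r^{p,q}$ (Lemma~\ref{lem:one-star}), together with the classification of induced subgraphs of butterflies in Observation~\ref{obs:induced-subgraphs}. Neither direction requires new combinatorial work; the content is entirely in selecting the right previously-proven statement for each case.

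For the ``if'' direction, suppose $H$ is a linear forest and set $N=2\cdot|V(H)|+1$. Observation~\ref{obs:induced-subgraphs} describes the induced subgraphs of $B_{i,j,N}$ as exactly the disjoint unions of a linear forest with at most one graph from its list (i)--(vii); taking that optional part to be empty shows that $H$ is an induced subgraph of $B_{i,j,N}$ for \emph{every} pair $i,j\geq 3$, so by Definition~\ref{d-cover} the graph $H$ covers every pair $(i,j)$ with $i,j\geq 3$. (If one prefers not to quote Observation~\ref{obs:induced-subgraphs}, the same conclusion follows from a direct embedding: the $N-1$ internal vertices of the connecting path of $B_{i,j,N}$ induce a $P_{N-1}$, and a linear forest on $|V(H)|$ vertices with $m\le |V(H)|$ components fits into $P_{N-1}$ as an induced subgraph by separating consecutive components with a single unused vertex, which uses only $|V(H)|+m-1\le 2|V(H)|-1=N-2$ vertices.)

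For the ``only if'' direction, I would argue by contraposition: assuming $H$ is not a linear forest, I exhibit a pair $(i,j)$ with $i,j\geq 3$ that $H$ does not cover. A graph fails to be a linear forest precisely when it contains a cycle or has a vertex of degree at least $3$, so there are two cases. If $H$ contains a cycle, then it contains a chordless cycle, that is, an induced copy of $C_p=D_0^p$ for some $p\geq 3$; Lemma~\ref{lem:one-cycle}(ii), applied with $k=0$, then says that $H$ covers no pair outside $\{(i,j):i,j\geq 3,\ p\in\{i,j\}\}$, so $H$ does not cover, for instance, $(q,q)$ for any $q\geq 3$ with $q\neq p$. If $H$ has no cycle but has a vertex $v$ of degree at least $3$, then $v$ together with three of its neighbours induces a $K_{1,3}=T_1^{1,1}$, because in a forest these three neighbours are pairwise non-adjacent; Lemma~\ref{lem:one-star}(ii), applied with $r=1$ and $p=q=1$, then says that $H$ covers no pair $(i,j)$ with $\max\{i,j\}<p+q+2=4$, so $H$ does not cover $(3,3)$. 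In either case $H$ misses a pair, completing the contrapositive.

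The only point requiring care — and it is a mild one — is the bookkeeping in the ``only if'' direction: one must check that the uncovered pair singled out in each case genuinely lies outside the set ${\cal X}$ furnished by the corresponding lemma, and that the two cases (a cycle, versus a vertex of degree at least $3$ in a forest) together exhaust all graphs that are not linear forests.
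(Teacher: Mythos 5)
Your proof is correct and follows essentially the same route as the paper's: the ``if'' direction embeds the linear forest into the connecting path of $B_{i,j,N}$ (the paper notes $H$ is an induced subgraph of $P_{2|V(H)|-1}$), and the ``only if'' direction reduces to an induced $D^p_k$ or an induced $T^{p,q}_r$ and invokes Lemma~\ref{lem:one-cycle}(ii) and Lemma~\ref{lem:one-star}(ii), exactly as the paper does. The only cosmetic difference is that you justify the case split directly from the definition of a linear forest (cycle versus degree-$3$ vertex in a forest) rather than via Observation~\ref{obs:induced-subgraphs}, which is equally valid.
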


\begin{proof}
If $H$ is a linear forest, then $H$ is an induced subgraph of a path on $2\cdot |V(H)|-1$ vertices. Hence $H$ is also an induced subgraph of $B_{i,j,N}$ for every $i,j\geq 3$, where $N=2\cdot |V(H)|+1$. By Definition~\ref{d-cover}, $H$ covers every pair $(i,j)$ with $i,j\geq 3$.

For the reverse direction, suppose $H$ covers every pair $(i,j)$ with $i,j\geq 3$. For contradiction, suppose $H$ is not a linear forest. Then, as a result of Definition~\ref{d-cover} and Observation~\ref{obs:induced-subgraphs}, either $H$ contains $T^{p,q}_r$ as an induced subgraph for some $p,q,r\geq 1$, or $H$ contains $D^p_k$ as an induced subgraph for some $p\geq 3$ and $k\geq 0$. In the first case, it follows from Lemma~\ref{lem:one-star}(ii) that $H$ does not cover the pair $(3,3)$. In the second case, it follows from Lemma~\ref{lem:one-cycle}(ii) that $H$ does not cover any pair $(i,j)$ with $r \not\in \{i,j\}$. In both cases, we obtain the desired contradiction.
\qed
\end{proof}

Consider the infinite table containing all the pairs $(i,j)$ with $i,j\geq 3$. From Lemmas~\ref{lem:one-star}--\ref{lem:mixed} and Tables~1--7, we can observe two important facts. First, the only graphs $H$ that cover the pair $(3,3)$ are induced subgraphs of $2D_\ell^3$ for some $\ell\geq 0$. Second, the only graphs $H$ that cover infinitely many rows and columns of this table are induced subgraphs of $T_r^{p,q}+T_r^{p',q'}$ for some $r,p,q,p',q'\geq 1$. Hence, any finite family ${\cal H}$ that covers all pairs $(i,j)$ must contain at least one graph of both types. Formally, we have the following observation.

\begin{observation}
\label{obs:must-2T}
Let ${\cal H}$ be a finite family of graphs.
If the poc-fvs for ${\cal H}$-free graphs is upper bounded by a constant $c_{\cal H}$, then ${\cal H}$ contains an induced subgraph of $2D_\ell^3$ for some $\ell\geq 0$ and an induced subgraph of $T^{p,q}_r + T^{p',q'}_r$ for some $r,p,q,p',q'\geq 1$.
\end{observation}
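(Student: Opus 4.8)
The plan is to prove Observation~\ref{obs:must-2T} as a direct consequence of Theorem~\ref{t-dichotomy2} together with the structural lemmas just established. Assume the poc-fvs for ${\cal H}$-free graphs is bounded by a constant. By Theorem~\ref{t-dichotomy2}, ${\cal H}$ covers every pair $(i,j)$ with $i,j\geq 3$; in particular, ${\cal H}$ covers the pair $(3,3)$. Since ${\cal H}$ is a family of graphs and covering $(3,3)$ means that some $H\in{\cal H}$ is an induced subgraph of $B_{3,3,N}$ (with $N=2|V(H)|+1$), it suffices to characterize which single graphs $H$ cover $(3,3)$. By Observation~\ref{obs:induced-subgraphs}, such an $H$ must be the disjoint union of a linear forest and at most one of the graphs listed in items (i)--(vii). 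Item (iv) is impossible, since $T_k^{p,q}$ with $p,q\geq 1$ covers only pairs with $\max\{i,j\}\geq p+q+2\geq 4$ (Lemma~\ref{lem:one-star}(ii)), and similarly items (v)--(vii) are ruled out by Lemmas~\ref{lem:two-stars}(ii) and~\ref{lem:mixed}(ii). Items (i), (ii) force a cycle of length $i=3$ or $j=3$, and item (iii) forces both $i=j=3$; all these are allowed. Thus $H$ covering $(3,3)$ means $H$ is the disjoint union of a linear forest with at most one of $D_\ell^3$ (items (i), (ii)) or $D_\ell^3+D_{\ell'}^3$ (item (iii)), and in either case $H$ is an induced subgraph of $2D_m^3$ for $m=\max\{|V(H)|,\ell,\ell'\}$. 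This establishes the first half of the observation.

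For the second half, I would argue that ${\cal H}$ must contain a graph $H$ that covers infinitely many rows \emph{and} infinitely many columns of the table of pairs. Indeed, if ${\cal H}=\{H_1,\dots,H_k\}$, then for every $(i,j)$ some $H_t$ covers it, so by a pigeonhole/infiniteness argument there is a single $H_t$ that covers infinitely many pairs $(i_1,j_1),(i_2,j_2),\dots$; passing to a subsequence we may assume either the first coordinates are all distinct (so $H_t$ covers infinitely many rows) or the second coordinates are all distinct (infinitely many columns). Now I invoke the lemmas describing exactly which pairs each type of induced subgraph covers: by Lemmas~\ref{lem:one-cycle}(ii) and~\ref{lem:two-cycles}(ii), any $H$ containing $D_k^p$ as an induced subgraph covers only pairs with $p\in\{i,j\}$, so covers at most the single row $i=p$ and single column $j=p$ plus their union --- in particular it does not cover infinitely many rows \emph{and} infinitely many columns simultaneously. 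By Lemma~\ref{lem:mixed}(ii), a graph containing $D_k^p+T_r^{p',q'}$ similarly covers only pairs with $p\in\{i,j\}$. By Lemma~\ref{lem:linear-forest}, if $H$ is a linear forest it is not an induced subgraph of any butterfly's defining data in a way that... — more precisely, a linear forest covers \emph{all} pairs, but a linear forest cannot by itself allow ${\cal H}$ to be a valid finite cover together with the constraint from $(3,3)$; however, for the purpose of this observation I only need that \emph{some} graph of ${\cal H}$ covers both infinitely many rows and columns, and a linear forest does cover all pairs. So the only candidates are linear forests and graphs containing $T_r^{p,q}+T_{r}^{p',q'}$. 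Since a finite family covering all $(i,j)$ could in principle use a linear forest, I instead need the sharper claim: by Observation~\ref{obs:induced-subgraphs}, the graphs $H$ that are induced subgraphs of \emph{some} butterfly and cover cofinitely-many rows-and-columns are exactly those of type (v), i.e.\ $H$ is the disjoint union of a linear forest and $T_r^{p,q}+T_{r}^{p',q'}$; but also a linear forest works. To unify these I note that a linear forest on $m$ vertices is an induced subgraph of $T_1^{p,q}+T_1^{p',q'}$ for suitably large $p,q,p',q'$ (e.g.\ split the paths of the forest across the arms), so in every case ${\cal H}$ contains an induced subgraph of $T_r^{p,q}+T_{r}^{p',q'}$ for some $r,p,q,p',q'\geq 1$.

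I would carry out the steps in this order: (1) from the boundedness hypothesis and Theorem~\ref{t-dichotomy2}, deduce that ${\cal H}$ covers $(3,3)$ and covers infinitely many pairs with distinct first coordinates and infinitely many with distinct second coordinates; (2) use Observation~\ref{obs:induced-subgraphs} plus Lemma~\ref{lem:one-star}(ii), Lemma~\ref{lem:two-stars}(ii), Lemma~\ref{lem:mixed}(ii) to conclude that a graph covering $(3,3)$ is an induced subgraph of $2D_\ell^3$; (3) use Lemma~\ref{lem:one-cycle}(ii), Lemma~\ref{lem:two-cycles}(ii), Lemma~\ref{lem:mixed}(ii) to conclude that a graph covering both infinitely many rows and infinitely many columns contains no $D_k^p$ as an induced subgraph, hence by Observation~\ref{obs:induced-subgraphs} is either a linear forest or the disjoint union of a linear forest and $T_r^{p,q}+T_r^{p',q'}$; (4) observe both possibilities yield an induced subgraph of some $T_r^{p,q}+T_r^{p',q'}$, handling the linear-forest case by explicitly embedding it into a large double-spider.

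The main obstacle I anticipate is step (3)--(4): cleanly justifying, via the pigeonhole argument over the infinite table and the coverage lemmas, that a single member of the finite family ${\cal H}$ must be responsible for infinitely many rows and infinitely many columns, and then correctly handling the degenerate linear-forest case so that the conclusion ``${\cal H}$ contains an induced subgraph of $T_r^{p,q}+T_r^{p',q'}$'' holds on the nose. The row/column counting needs a small Ramsey-type step (from infinitely many covered pairs, extract an infinite monotone or distinct-coordinate subfamily), and one must be careful that "covers infinitely many rows" and "covers infinitely many columns" are both needed, since a graph containing $D_k^p$ can cover an entire row \emph{or} an entire column but never enough of both. Everything else is a routine case analysis driven by the tables in Figures~\ref{f-onecycle_twocycles}--\ref{f-onecycleonestar_case3}.
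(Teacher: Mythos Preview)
Your overall strategy matches the paper's: the paper records this as an observation justified by the paragraph immediately preceding it, arguing that (a) only induced subgraphs of $2D_\ell^3$ cover the pair $(3,3)$, and (b) only induced subgraphs of some $T_r^{p,q}+T_r^{p',q'}$ cover ``infinitely many rows and columns'' of the table, so a finite ${\cal H}$ covering all pairs must contain one of each. Your treatment of (a) is correct and is exactly the intended argument.

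Your treatment of (b), however, has the gap you yourself flagged, and it is a real one. The assertion that a graph containing $D_k^p$ ``does not cover infinitely many rows and infinitely many columns simultaneously'' is false under the natural reading: $D_k^p$ covers the cell $(i,p)$ for every $i\geq 3$ and $(p,j)$ for every $j\geq 3$, so every row and every column contains a covered cell. Likewise, your pigeonhole step as written only produces an $H_t$ covering infinitely many pairs with distinct first \emph{or} distinct second coordinates, which $D_k^p$ already satisfies. So steps (1) and (3) of your plan do not mesh.

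The fix is far simpler than the Ramsey-type extraction you propose: apply pigeonhole to the diagonal. Since ${\cal H}$ is finite and covers every pair $(M,M)$ with $M\geq 3$, some single $H_t\in{\cal H}$ covers $(M,M)$ for infinitely many values of $M$. If $H_t$ contained an induced cycle $C_p$, then by Lemma~\ref{lem:one-cycle}(ii) every pair it covers has $p\in\{i,j\}$, so $(M,M)$ covered forces $M=p$, contradicting ``infinitely many''. Hence $H_t$ is acyclic, and being an induced subgraph of some butterfly it is, by Observation~\ref{obs:induced-subgraphs}, a linear forest together with zero, one, or two spiders $T_k^{p,q}$. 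Any such graph embeds as an induced subgraph into $T_R^{a,b}+T_R^{c,d}$ for sufficiently large parameters (put each spider into one summand and distribute the linear forest along the long arms with gaps), which gives the conclusion. This also cleanly absorbs your ``degenerate linear-forest case'' without a separate argument.
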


Suppose ${\cal H}$ is a family of  
graphs such that the poc-fvs for ${\cal H}$-free graphs is bounded by a constant. By Observation~\ref{obs:must-2T}, ${\cal H}$ contains a graph $H$ that is an induced subgraph of $T^{p,q}_r + T^{p',q'}_r$ for some  $r,p,q,p',q'\geq 1$. 
If $H$ is also an induced subgraph of $T_r^{p,q}$ for some $r,p,q\geq 1$, or if ${\cal H}$ contains another graph that is of this form, then Lemma~\ref{lem:one-star} and Table~3 show that there are only finitely many pairs $(i,j)$ that are not covered by $H$. These cells need to be covered by the remaining graphs in ${\cal H}$. Using Lemmas~\ref{lem:one-star}--\ref{lem:mixed}, we can determine exactly which combination of graphs covers those remaining pairs.

Suppose ${\cal H}$ does not contain an induced subgraph of $T_r^{p,q}$ for any $r,p,q\geq 1$. Then Lemma~\ref{lem:two-stars} and Table~4 imply that there are finitely many rows and columns in which no pair is covered by $H$. In particular, since $p,q,p',q'\geq 1$, the pairs $(i,3)$ and $(3,j)$ are not covered for any $i,j\geq 3$. From the lemmas in Section~\ref{s-proof2} and the corresponding tables, it it clear that the only graphs  that cover infinitely many pairs of this type are induced subgraphs of $T_r^{p,q}$ for some $r,p,q\geq 1$ or of $D_{r'}^3+T_r^{p,q}$ for some $r'\geq 0$ and $p,q\geq 1$. Hence, ${\cal H}$ must contain a graph that is isomorphic to such an induced subgraph. Similarly, if the pairs $(i,4)$ and $(4,j)$ are not covered for any $i,j\geq 3$, then ${\cal H}$ must contain an induced subgraph of $T_r^{p,q}$ for some $r,p,q\geq 1$ or of $D_{r'}^4+T_r^{p,q}$ for some $r'\geq 0$ and $p,q\geq 1$, and so on. Once all rows and columns contain only finitely many pairs that are not covered yet, we can determine all possible combinations of graphs that cover those last pairs.

We are now ready to restate and prove Corollary~\ref{cor:H1-H2-free}, which can be seen as an illustration of the above procedure
for the case where $|{\cal H}|=2$.

\medskip
\noindent
{\bf Corollary~\ref{cor:H1-H2-free}.}
{\it Let $H_1$ and $H_2$ be two graphs, and let ${\cal H}=\{H_1,H_2\}$. Then the poc-fvs for ${\cal H}$-free graphs is upper bounded by a constant $c_{\cal H}$ if only if there exist integers $\ell\geq 0$ and $r\geq 1$ such that one of the following conditions holds:
\begin{itemize}
\item $H_1$ or $H_2$ is a linear forest;
\item $H_1$ and $H_2$ are induced subgraphs of $D_\ell^3$ and $2T^{1,1}_r$, respectively;
\item $H_1$ and $H_2$ are induced subgraphs of $2D_\ell^3$ and $T^{1,1}_r$, respectively.
\end{itemize}
}

\begin{proof}
First suppose that the price of connectivity for feedback vertex set for ${\cal H}$-free graphs is bounded by some constant $c_{\cal H}$, and suppose that neither $H_1$ nor $H_2$ is a linear forest. Due to Observation~\ref{obs:must-2T}, we may without loss of generality assume that $H_1$ is an induced subgraph of $2D_\ell^3$ for some $\ell\geq 0$ and $H_2$ is an induced subgraph of $T^{p,q}_r + T^{p',q'}_r$ for some $r,p,q,p',q'\geq 1$. From Lemmas~\ref{lem:one-cycle} and~\ref{lem:two-cycles} and the assumption that $H_1$ is not a linear forest, it follows that $H_1$ does not cover the pair $(4,4)$. Hence $H_2$ must cover this pair. This, together with Lemma~\ref{lem:two-stars}, implies that $p=q=p'=q'=1$, i.e., $H_2$ is an induced subgraph of $2T_r^{1,1}$ for some $r\geq 1$. 
If $H_1$ is an induced subgraph of $D_{\ell'}^3$ for some $\ell'\geq 0$, then the second condition holds and we are done. 

Suppose $H_1$ is not an induced subgraph of $D_{\ell'}^3$ for any $\ell'\geq 0$.
Then $H_1$ covers only the pair $(3,3)$ due to Lemma~\ref{lem:two-cycles}. This means that all the pairs $(i,j)$ with $i,j\geq 3$ and $3\in \{i,j\}$, apart from $(3,3)$, must be covered by $H_2$. From Lemma~\ref{lem:one-star} and~\ref{lem:two-stars} it is clear that this only holds if $H_2$ is an induced subgraph of $T_{r'}^{1,1}$ for some $r'\geq 1$. Hence the third condition holds.

The converse direction follows by combining Theorem~\ref{t-dichotomy2} and Lemma~\ref{lem:linear-forest} if the first condition holds, Lemmas~\ref{lem:one-cycle} and~\ref{lem:two-stars} if the second condition holds, and Lemmas~\ref{lem:two-cycles} and~\ref{lem:one-star} if the third condition holds.
\qed
\end{proof}

\section{Proof of Theorem~\ref{t-tetrachotomy}}
\label{s-tetrachotomy}

In order to prove Theorem~\ref{t-tetrachotomy} we start with the following lemma.

\begin{lemma}
\label{l-p5}
For every integer $s$, there is a constant $c_s$ such that $\cfvs(G)\leq \fvs(G) + c_s$ for every connected $P_5+sP_1$-free graph $G$.
\end{lemma}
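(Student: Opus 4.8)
The plan is to show that every connected $P_5+sP_1$-free graph $G$ has a minimum feedback vertex set $F$ such that, outside of a part of $G$ of bounded size, the graph behaves like a $P_5$-free graph, and that $P_5$-free graphs have bounded diameter on the ``interesting'' part. Concretely, I would first invoke Observation~\ref{o-nicefvs} to fix a minimum feedback vertex set $F$ in which every vertex lies on a cycle and has degree at least~$3$; I may also delete all degree-$1$ vertices as in the proof of Theorem~\ref{t-dichotomy2}, since this changes neither $\fvs$ nor $\cfvs$. The goal is then to connect $F$ up by adding only $O_s(1)$ extra vertices. Following the strategy of Theorem~\ref{t-dichotomy2}, it suffices to bound, by a constant depending only on $s$, the distance in $G$ between any two vertices of $F$ (then connect $F$ greedily along shortest paths, adding at most $c_s$ vertices per element of $F$, but here I want the additive rather than multiplicative bound, so one has to be a little more careful — see below).

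The main work is a distance argument exploiting $P_5$-freeness. Suppose two vertices $x,y\in F$ are at distance at least some large constant $d$. Take a shortest $x$--$y$ path $Q=x\,q_1\,q_2\cdots y$. Since $x$ and $y$ each lie on an induced cycle and have degree $\geq 3$, I can attach short ``pendant'' structures at each end that together with an internal portion of $Q$ would contain an induced $P_5$ — unless the $sP_1$ part obstructs it. The key point is: an induced $P_5$ plus $s$ isolated vertices needs $s$ vertices that are non-adjacent to all five path-vertices and to each other. If $G$ is $P_5+sP_1$-free but does contain induced $P_5$'s, then for \emph{every} induced $P_5$ there is no independent set of size $s$ in its non-neighbourhood. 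This should force the ``far away'' part of $G$ relative to any fixed induced $P_5$ to have bounded independence number, and hence (on a forest-like region after deleting $F$) bounded size. More precisely: delete $F$ from $G$; each component of $G-F$ is a forest, and a forest with independence number $<s$ has at most $2s-2$ vertices. So every component of $G-F$ that lies far from some fixed induced $P_5$ has bounded size. One then argues that $F$ together with these bounded-size attached forests has bounded diameter, except possibly near one fixed induced $P_5$, whose size is a constant.

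The hard part will be organizing the case distinction cleanly: when $G$ itself is $P_5$-free, a separate (and easier) argument is needed — here one can use that connected $P_5$-free graphs with a feedback vertex set have the property that $G-F$ is a $P_5$-free forest, i.e.\ a disjoint union of paths on $\leq 4$ vertices and stars, and that the dominating-set-like structure of $P_5$-free graphs keeps everything within constant distance of $F$. The other obstacle is converting a ``bounded diameter of $F$ in $G$'' statement into the additive bound $\cfvs(G)\le\fvs(G)+c_s$ rather than a multiplicative one: the trick is that once $F$-vertices are pairwise within distance $O_s(1)$, a Steiner-tree-type argument connects all of $\fvs(G)$ of them using a \emph{total} of $O_s(1)$ extra vertices (not $O_s(1)$ per vertex), because the union of the bounded-size attached forests and the bounded-diameter skeleton has bounded treewidth/size in the relevant region — or, more simply, because a breadth-first tree of depth $O_s(1)$ rooted at $F$ inside $G$ already contains a connected subgraph spanning $F$ whose non-$F$ vertices number $O_s(1)$ since they all come from the constantly many bounded-size forest-components plus the one fixed $P_5$. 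Assembling these pieces gives the constant $c_s$ and completes the proof.
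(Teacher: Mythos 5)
Your core combinatorial observation is the right one --- in a $P_5+sP_1$-free graph containing an induced $P_5$ with vertex set $V(P)$, any independent set avoiding $N[V(P)]$ has fewer than $s$ vertices --- but the way you try to cash it in does not work. The decisive gap is the final step: you reduce to ``all vertices of $F$ are pairwise within distance $O_s(1)$'' and then assert that a breadth-first/Steiner-type tree connects all of $F$ using only $O_s(1)$ extra vertices \emph{in total}. That conversion is false in general. The paper's own graphs $L_k$ (Section~\ref{s-tetrachotomy}, Figure~\ref{f-lk}) have diameter~$4$, a minimum feedback vertex set $\{x,y_1,\dots,y_k\}$ whose vertices are pairwise within distance~$4$, and yet $\cfvs(L_k)-\fvs(L_k)=k$: every $y_i$ needs its own private connector, so the number of extra vertices grows with $|F|$ even though all distances are bounded. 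Nothing in your argument excludes this phenomenon; your claim that the connectors ``all come from the constantly many bounded-size forest-components plus the one fixed $P_5$'' is unsupported --- only the components of $G-F$ lying \emph{outside} $N[V(P)]$ have bounded total size, while the components inside $N[V(P)]$, where the connectors would typically live, can be arbitrarily many and arbitrarily large. The $P_5$-free base case is likewise only gestured at.

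The missing idea is to use \emph{connected domination} instead of diameter. From your own observation, $V(P)\cup I$, where $I$ is a maximal independent set of $G-N[V(P)]$ (so $|I|\le s-1$), is a dominating set of size at most $s+4$; by the result of Duchet and Meyniel~\cite{DM82}, $G$ then has a \emph{connected} dominating set $D$ with $|D|\le 3(s+4)-2=3s+10$. For any minimum feedback vertex set $S$, the set $S\cup D$ is automatically a connected feedback vertex set, since every vertex of $S\setminus D$ has a neighbour in the connected set $D$; this gives $\cfvs(G)\le\fvs(G)+3s+10$ in one step, with no diameter argument at all. For the $P_5$-free case one invokes Bacs\'o and Tuza~\cite{BT90}: every connected $P_5$-free graph has a dominating clique or a dominating $P_3$, which is already a connected dominating set meeting a minimum feedback vertex set in all but at most three vertices, giving the constant $3$. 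This is the route the paper takes.
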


\begin{proof}
First suppose $H$ is an induced subgraph of $P_5$. Let $G$ be a connected $H$-free graph. In particular, $G$ is $P_5$-free. Hence, due to a result by Bacs\'{o} and Tuza~\cite{BT90}, there exists a dominating set $D\subseteq V(G)$ such that $D$ is a clique or $D$ induces a $P_3$ in~$G$. Let $F$ be a minimum feedback vertex set of $G$. Note that $|D\setminus F|\leq 2$ if $D$ is a clique and $|D\setminus F|\leq 3$ if $D$ induces a $P_3$. Since $D$ is a connected dominating set in $G$, the set $F\cup D$ is a connected feedback vertex set of $G$ of size at most $|F|+3$. Hence, we can take $c_H=3$.

Now suppose $H$ is an induced subgraph of $P_5+sP_1$ for some integer $s$. Let $G$ be a connected $H$-free graph. If $G$ is $P_5$-free, then we can take $c_H=3$ due to the above arguments. Suppose $G$ contains an induced path $P$ on~$5$ vertices. Let $I$ be a maximal independent set in the graph obtained from $G$ by deleting the five vertices of $P$ as well as all their neighbors in $G$. Since $G$ is $P_5+sP_1$-free, we know that $|I|\leq s-1$. Note that $V(P)\cup I$ is a dominating set of $G$. 
Recall that Duchet and Meyniel~\cite{DM82} showed that, for every connected graph $G$, it holds that $\cds(G)\leq 3\cdot \ds(G)-2$.
Hence, there is a connected dominating set $D$ in $G$ of size at most $3(|V(P)|+|I|)-2\leq 3s+10$. Let $S$ be a minimum feedback vertex set in $G$. Then $S\cup D$ is a connected feedback vertex set in $G$ of size at most $|S|+3s+10$. Hence, we can take 
$c_H=3s+10$.
This completes the proof of Lemma~\ref{l-p5}.\qed
\end{proof}

We can prove a similar lemma for the case when $H=sP_3$ for some $s\geq 0$. In order to do this we need an additional lemma.

\begin{lemma}\label{l-theclaim}
Let $s\geq 1$ be an integer and let $G$ be 
a connected $sP_3$-free graph with a subset $S\subseteq V(G)$ and an independent set
$U\subseteq V(G)\setminus S$. If there exists a connected component $Z$ of $G[S]$ that contains an induced copy of $(s-1)P_3$, then
 there exists a set  $S'$ with $S\subseteq S'$ of size at most  $|S|+2s-2$ such that
\begin{itemize}
\item [(i)] $G[S']$ has a connected component $Z'$ containing all vertices of $V(Z)\cup (S'\setminus S)$;
\item [(ii)] every vertex of $U'=U\setminus S'$ is adjacent to at most one connected component of $G[S']$ that is not equal to $Z'$;
\item [(iii)] every connected component of $G[S']$ not equal to $Z'$ is adjacent to at most one vertex of $U'$.
\end{itemize}
\end{lemma}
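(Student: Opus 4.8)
The plan is to use the hypothesis that $Z$ contains an induced $(s-1)P_3$ to pin down the structure of $G$ far from $Z$, and then to build $S'$ by repeatedly ``absorbing'' small components of $G[S]$ (the components of $G[S]$ other than $Z$) into the component containing $Z$.

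First I would establish a structural fact: since $G$ is $sP_3$-free and $Z$ contains an induced $(s-1)P_3$, the subgraph $G'$ obtained from $G$ by deleting $V(Z)$ together with all its neighbours is $P_3$-free, hence a disjoint union of cliques. Indeed, an induced $P_3$ in $G'$ is non-adjacent to all of $V(Z)$, so together with the $(s-1)P_3$ inside $Z$ it would form an induced $sP_3$. From this I extract consequences about the small components: each small component $C$ has no edge to $Z$, so $V(C)$ lies entirely in $G'$; therefore $C$ is a clique, is contained in a single clique $Q_C$ of $G'$, distinct small components lie in distinct cliques of $G'$, and (as $U$ is independent) $Q_C$ contains at most one vertex of $U$, which I call $u_C$. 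Consequently every $U$-neighbour of $C$ is either $u_C$ or lies in $U_Z := U \cap N(V(Z))$ (a $U$-neighbour outside $U_Z$ lies in $G'$, hence in $Q_C$, hence equals $u_C$). In particular, if $C$ has at least two $U$-neighbours then it has a neighbour in $U_Z$, and any vertex of $U$ with at least two small-component neighbours lies in $U_Z$, since it is then adjacent to two distinct cliques of $G'$.

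Next I would construct $S'$. Every small component $C$ with at least two $U$-neighbours must be dissolved: choosing a neighbour $u \in U_Z$ of $C$ and adding it to $S$ merges $C$ (and every other small component adjacent to $u$) into the component of $Z$, because $u$ is adjacent to $V(Z)$. So I would pick a minimal set $W_1 \subseteq U_Z$ whose addition dissolves all such components, then put into $W_2 \subseteq U_Z$ every remaining vertex of $U_Z$ that still has at least two non-dissolved small-component neighbours, and set $S' = S \cup W_1 \cup W_2$. Using the structural facts above, a direct check shows that all vertices of $W_1 \cup W_2$ lie in the component $Z'$ of $G[S']$ containing $V(Z)$, that the components of $G[S']$ other than $Z'$ are exactly the non-dissolved small components, that each $u \in U \setminus S'$ is adjacent to at most one such component, and that each such component is adjacent to at most one vertex of $U \setminus S'$, i.e. (i)--(iii) hold.

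It remains to bound $|W_1| + |W_2|$ by $2s-2$, and this is the main obstacle. For each vertex of $W_1$, minimality provides a small component that it privately dissolves, which, having two $U$-neighbours, yields an induced $P_3$ formed inside that component together with one of its $U$-neighbours; for each $u \in W_2$, two of its non-dissolved small-component neighbours yield an induced $P_3$ through $u$. The delicate point is that two of these certifying $P_3$'s may be adjacent in $G$ (for instance, a vertex of $U_Z$ may be adjacent to a component already dissolved by a different added vertex, or two privately dissolved components may each use the same $U_Z$-vertex as their third $P_3$-vertex). To control this I would select the certifying $P_3$'s carefully --- preferring a vertex $u_C$ over a $U_Z$-vertex as the third vertex whenever the component permits, and otherwise using two vertices of the (clique) component itself --- and then organise the $|W_1|+|W_2|$ additions into groups of at most two, each group charged to a single induced $P_3$, with the chosen $P_3$'s pairwise non-adjacent. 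Since $G$ is $sP_3$-free, a pairwise-non-adjacent family of induced $P_3$'s has at most $s-1$ members, so $|W_1|+|W_2| \le 2(s-1)$, as required. I expect this charging argument --- ensuring only a factor-$2$ loss between the number of added vertices and an induced $P_3$-packing --- to be the technically hardest part; everything else follows routinely from the cluster-graph structure of $G'$.
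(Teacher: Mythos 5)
Your structural setup is sound and genuinely different from the paper's: the observation that $G-N[V(Z)]$ is $P_3$-free (hence a disjoint union of cliques) because $Z$ already hosts an induced $(s-1)P_3$ is correct, and the consequences you draw from it (each small component is a clique sitting in its own cluster, has at most one $U$-neighbour outside $U_Z:=U\cap N(V(Z))$, and any $U$-vertex outside $U_Z$ sees at most one small component) do yield a construction satisfying (i)--(iii). However, the proof is incomplete exactly where you say it is hardest: the bound $|W_1|+|W_2|\le 2s-2$ is the entire quantitative content of the lemma (it is what feeds the $4s-4$ increase in the proof of Lemma~\ref{l-sp3}), and your ``charging argument'' is only a plan, not a proof. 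The difficulty is real and not routine. Each $w\in W_1$ certifies itself by a private component $C_w$, but when $C_w$ is a singleton whose second $U$-neighbour also lies in $U_Z\setminus W_1$, the certifying $P_3$'s of distinct $w$'s can collide through that external vertex. A concrete instance: take $Z=z_1z_2z_3$, three singleton components $x_1,x_2,x_3$ with $U$-neighbourhoods $\{u_1,u_2\}$, $\{u_2,u_3\}$, $\{u_3,u_1\}$, all $u_i$ adjacent to $z_2$; this graph is $2P_3$-free, a minimum cover is $W_1=\{u_1,u_2\}$ with private components $x_3$ and $x_2$, and their natural certificates $u_1\,x_3\,u_3$ and $u_2\,x_2\,u_3$ share $u_3$. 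Here $|W_1|=2=2s-2$, so the bound is tight and any honest count must resolve such collisions; ``organising the additions into groups of at most two charged to pairwise non-adjacent $P_3$'s'' is precisely the statement that needs proving, and it is not clear how to do it when a single external $U_Z$-vertex is adjacent to many private components.

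The paper avoids this obstacle with a different decomposition that builds ``double privacy'' into the cover. It first takes a minimum set $U_1\subseteq U$ covering \emph{all} small components adjacent to $U$, lets $A_1$ be the union of the private components of the vertices of $U_1$, and then takes a minimum $U_2\subseteq U_1$ covering the remaining components $A_2=A-A_1$. Every $u\in U_2$ then owns a private component in $A_1$ \emph{and} a private component in $A_2$, so its certificate is a $P_3$ centred at $u$ with both endpoints in components seen by no other vertex of $U_2$; mutual inducedness is automatic and $|U_2|\le s-1$ falls out immediately. A symmetric second round over $U_3=U\setminus U_1$ contributes another at most $s-1$ vertices. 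If you want to keep your (nicer) cluster-graph framework, you should either import this two-level private-component idea to replace the single minimum cover $W_1$, or actually carry out the pairing argument, e.g.\ by analysing the pairwise-intersection structure that $sP_3$-freeness forces on the $U_Z$-neighbourhoods of the components with at least two $U$-neighbours. As it stands, the key inequality is asserted rather than proved.
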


\begin{proof}
Let graph $A$ be the union of the connected components of $G[S]$ not equal to $Z$ that are adjacent to $U$. Note that all other connected components of $G[S]$ not equal to $Z$ are not relevant for satisfying the conditions of the lemma (as they are not adjacent to $U$).

Let $U_1\subseteq U$ be a minimum set of vertices in $U$ such that every connected component of $A$ is adjacent to $U_1$.
By minimality, every vertex in $U_1$ has a private component in $A$, that is, a connected component of $A$ not adjacent to other vertices of $U_1$.
For every $u\in U_1$, fix a private component of $u$ in $A$, and let $A_1$ be the union of all these private components.
 Consider the graph $A_2=A-A_1$. Every connected component of $A_2$ is adjacent to $U_1$ (since this is true for all connected components of $A$).
Let $U_2$ be a minimum set of vertices in $U_1$ such that every connected component of $A_2$ is adjacent to $U_2$.
By minimality, every vertex in $U_2$ has a private component in $A_2$, that is, a connected component of $A_2$ not adjacent to other vertices of $U_2$.
We have that $|U_2|<s$, as otherwise by using the neighbours in the private components
that each vertex in $U_2\subseteq U_1$ has in $A_1$ and in $A_2$, respectively, 
we find that there is an induced $sP_3$ in $G$.

We now move $U_2$ to $Z$. As each vertex $u\in U_2$ is adjacent to a connected component of $A_1$ and to a connected component of $A_2$, the $sP_3$-freeness of $G$ implies
that $u$ is adjacent to $Z$. Hence moving $U_2$ to $Z$ maintains connectivity of $Z$. Moreover, all connected components of $A$ adjacent to $U_2$ also move to $Z$. Hence, as each component in $A_2$ is adjacent to a vertex in $U_2$, afterwards every remaining connected component of $A$ belongs to $A_1$ and $S$ increased in size by at most~$s-1$. In particular all newly added vertices went to $Z$.

Consider now the remaining connected components of $A_1$. Note that each such connected component has a neighbour in $U_1\setminus U_2$. Let $U_3=U\setminus U_1$. It is still possible that some remaining connected component of $A_1$ is adjacent to a vertex of $U_3$ (besides its neighbour in $U_1\setminus U_2$).  Moreover, a vertex in $U_3$ may be adjacent to more than one remaining connected component of $A_1$.  Below we explain how to handle this situation.

 Let $A_3$ be the union of those remaining connected components of $A_1$ that have at least one neighbour in $U_3$. We pick a minimum set $U_4$
of vertices in $U_3$ such that each connected component of $A_3$ is adjacent to $U_4$.
If $|U_4|\ge s$ then at least $s$ vertices in $U_4$ have private neighbours in $A_3$ (private with respect to other vertices in 
$U_4$). In this case we find an induced $sP_3$ in $G$ (where each $P_3$ has its middle vertex in $A_3$, one
end-vertex in $U_1\setminus U_2$ and the other end-vertex in $U_4$), so this is not possible.
Hence $|U_4|<s$.

As $G$ is $sP_3$-free, every vertex in $U_4$ with at least two connected components in $A_3$ is adjacent to $Z$. We put these vertices in a set $W$.
Then the number of connected components of $A_3$ not adjacent to $W$ is equal to $|U_4|-|W|$, as each such a connected component is adjacent to a vertex in $U_4$ that is not adjacent to any other connected component of $A_3$ (as otherwise we would have placed the vertex in $W$). Moreover, each such a connected component is adjacent
to a vertex in $U_1\setminus U_2$. Hence, we find an induced $P_3$, so at least one of the two neighbours in $U_1\setminus U_2$ or $U_4$, respectively, must be adjacent to $Z$ (due to the $sP_3$-freeness of $G$). We put that vertex in $W$ as well. Then $W$ has the following properties: $W$ has size at most $s-1$, each vertex of $W$ is adjacent to $Z$, and each connected component of $A_3$ is adjacent to a vertex of $W$.

We now move all vertices of $W$  to $Z$. This operations maintains connectivity of $Z$. 
Afterwards, all components of $A_3$ belong to $Z$ as well. Hence, in the end every remaining vertex of $U$ is adjacent to at most one component of $A$, and
every remaining component of $A$ is adjacent to at most one vertex of $U$. So, conditions (ii) and (iii) are satisfied.
As all the new vertices went to $Z$, condition (i) is also satisfied. Moreover, also our last operation made $S$ increase in size by at most $s-1$.
Hence the total increase of $S$ is at most $2s-2$, as required.\qed 
\end{proof}
We can now prove the following lemma.

\begin{lemma}
\label{l-sp3}
For every integer $s$, there is a constant $c_s$ such that $\cfvs(G)\leq \fvs(G) + c_s$ for every connected $sP_3$-free graph $G$.
\end{lemma}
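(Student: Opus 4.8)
The plan is to mimic the structure of the proof of Lemma~\ref{l-p5}, but now using the bounded domination result for $sP_3$-free graphs instead of the one for $P_5$-free graphs, together with the structural clean-up provided by Lemma~\ref{l-theclaim}. First I would recall that $sP_3$-free graphs admit a \emph{connected dominating set whose structure is controlled}: by a theorem of Bacs\'o and Tuza (for connected $P_3$-free, i.e. complete, graphs trivially, and more generally by known results on bounded dominating sets in $sP_3$-free graphs, or by a direct induction on $s$) every connected $sP_3$-free graph $G$ has a dominating set $D$ that consists of at most $f(s)$ cliques, for some function $f$ depending only on $s$. If no such off-the-shelf statement is available, one proves it directly: a maximum induced collection of vertex-disjoint $P_3$'s has fewer than $s$ of them; the vertices of these $P_3$'s together with their neighbourhoods dominate a bounded-size part, and the rest is $P_3$-free, hence a disjoint union of cliques, of which (by connectivity and $sP_3$-freeness) only boundedly many are relevant. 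In any case, the upshot is: $G$ has a dominating set $D$ with $|D| = O_s(1)$ that induces a bounded number of cliques (in particular a forest structure after removing a bounded set).

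Given such a $D$, let $F$ be a minimum feedback vertex set of $G$. Take $S = F \cup D$; then $|S| \le \fvs(G) + O_s(1)$ and $S$ is still a dominating set of $G$. Now $G[S]$ need not be connected, but it has only $O_s(1)$ connected components, because $D$ alone already hits all but $O_s(1)$ of them and each component of $G[D]$ is a single clique-like piece. Here is where Lemma~\ref{l-theclaim} enters: I would iterate it to glue the components of $G[S]$ together through the dominating structure. Concretely, pad $S$ so that one component $Z$ of $G[S]$ contains an induced $(s-1)P_3$ (always possible by adding a bounded number of vertices, or handle the degenerate case separately), set $U$ to be an independent set of representatives — one per remaining component of $G[S]$, chosen inside the neighbourhood of that component, which exists since $S$ dominates $G$ — and apply Lemma~\ref{l-theclaim}. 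The lemma hands back $S'$ with $|S'| \le |S| + 2s-2$, a big component $Z'$ swallowing $Z$, and the crucial property that afterwards every remaining component of $G[S']$ is adjacent to at most one vertex of $U' = U\setminus S'$ and vice versa. This means each surviving small component can be attached to $Z'$ by a path of length at most two through its (unique) $U'$-neighbour and then through $S'$; since there are only $O_s(1)$ such components, we spend only $O_s(1)$ extra vertices to make everything connected into one feedback vertex set. The resulting set is a connected feedback vertex set of size $\fvs(G) + O_s(1)$, giving the constant $c_s$.

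A few details need care. One must argue that the number of connected components of $G[S]$ (equivalently, of $G[D]$) is bounded by a function of $s$ only; this is where $sP_3$-freeness plus connectivity of $G$ is used — if there were many components, one could pick one $P_3$ (or a path witnessing adjacency) from each via shortest paths in $G$ between far-apart components and extract an induced $sP_3$. One must also verify that $U$ can be chosen as a genuine \emph{independent} set disjoint from $S$: pick the representatives greedily, and if two chosen vertices turn out adjacent, note their two private components plus the edge between the representatives yields two $P_3$'s towards building an induced $sP_3$, so a bounded number of ``bad'' pairs can simply be absorbed into $S$ at bounded cost, after which the remaining representatives are independent. Finally, Lemma~\ref{l-theclaim} may need to be applied more than once (each application fixes the $Z$-versus-$A$ interaction but new small components could still be mutually entangled); however, each application strictly enlarges $Z'$ by absorbing components, and there are only $O_s(1)$ components to begin with, so the process terminates after $O_s(1)$ rounds with total padding $O_s(1)$.

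The main obstacle I anticipate is precisely the bound on the number of components of $G[D]$ — i.e. turning ``$G$ is connected and $sP_3$-free'' into ``the dominating clique-cover has $O_s(1)$ pieces, all of which matter.'' The naive dominating set from the $P_3$-free remainder could a priori have unboundedly many clique-components; one needs the connectivity of $G$ together with a careful $sP_3$-extraction argument (routing induced $P_3$'s through the ambient graph) to collapse this to a constant, or alternatively to show that after adding $F$ the relevant components are few. Once that combinatorial bound is in hand, Lemma~\ref{l-theclaim} does the heavy lifting and the rest is bookkeeping with constants depending only on $s$.
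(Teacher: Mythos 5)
Your proposal has a genuine gap, and it is exactly the obstacle you flag at the end: the bounded-domination premise is false, and so is the bound on the number of components that you need for the gluing step. A connected $2P_3$-free graph can have unbounded domination number: take $n$ vertex-disjoint triangles $\{a_i,b_i,c_i\}$ and a new vertex $v$ adjacent to each $a_i$. Every induced $P_3$ in this graph contains $v$ (all edges avoiding $v$ lie inside a single triangle), so the graph is $2P_3$-free, yet any dominating set must meet every triangle, so $\ds(G)\geq n$. The same example kills the component bound: a minimum feedback vertex set may be $\{b_1,\ldots,b_n\}$, which induces $n$ components. Since your plan is to attach each leftover component of $G[S]$ to the big component $Z'$ by \emph{adding} a path of bounded length per component, an unbounded number of components forces an unbounded additive overhead, and no ``$sP_3$-extraction'' argument can rescue this, because the number of components genuinely is unbounded.

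The paper's proof avoids both difficulties. It does not use a dominating set at all: it anchors on a connected set $Y$ of size $O(s^2)$ containing an induced $(s-1)P_3$ (built by joining the $s-1$ paths via shortest paths, using that $sP_3$-free implies $P_{4s-1}$-free), and sets $S'=S\cup Y$. The presence of $(s-1)P_3$ inside the component $Z$ containing $Y$ forces, by $sP_3$-freeness, every \emph{other} component of $G[S']$ to be a clique, and forces the forest $G-S'$ to collapse (after adding $O(s^2)$ branch and degree-$2$ vertices) to a disjoint union of $K_1$'s and $K_2$'s. After applying Lemma~\ref{l-theclaim} to the two resulting independent sets, each leftover clique component $A$ pairs with a unique outside component $B_A$ such that $A\cup B_A$ is a clique with a vertex adjacent to $Z$. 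The decisive trick, which your proposal is missing, is that these possibly unboundedly many components are then absorbed by \emph{swapping} a vertex of $A$ for a vertex of $B_A$ adjacent to $Z$ --- a size-preserving exchange justified by a neighbourhood-containment argument ($N_G[x]\subseteq N_G[y]$), which can be repeated arbitrarily often at zero additive cost. You would need to replace your per-component path-addition step with such a size-preserving reconnection argument for the proof to go through.
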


\begin{proof}
We prove the lemma by induction on $s$. 
The lemma trivially holds if $s=1$, since a connected $P_3$-free graph is a complete graph and we can simply take $s_1=0$.
Let $G=(V,E)$ be a connected $sP_3$-free graph for some $s\geq 2$. If $G$ is $(s-1)P_3$-free, then $\cfvs(G)\leq \fvs(G) + c_{s-1}$ for some constant $c_{s-1}$ by the induction hypothesis. Suppose $G$ contains an induced subgraph $H$ isomorphic to $(s-1)P_3$,  
and let $v_1,\ldots,v_{s-1}\in V(H)$ be the vertices of degree~$2$ in $H$. Let $Y\subseteq V$ be the set obtained from $V(H)$ by adding, for $i=2,\ldots,s-1$, all the vertices of a shortest path in $G$ from $v_1$ to $v_i$. Since the assumption that $G$ is $sP_3$-free implies that $G$ is $P_{4s-1}$-free, the total number of vertices in $Y$ is at most $|V(H)|+(s-2)(4s-4)\leq 3(s-1)+(s-2)\cdot4s\leq 4s^2-4s$. The graph $G[Y]$ is connected by construction.

Let $S$ be a minimum feedback vertex set of $G$, and let $S'=S\cup Y$. Then we find that
$$|S'|\leq |S|+4s^2-4s.$$  
We note the following.
Since~$G$ is $sP_3$-free and $G[S']$ contains $(s-1)P_3$ as an induced subgraph, every connected component of $G[S']$, apart from the connected component that contains~$Y$, is a complete graph.
Moreover, the graph $G-S'=G[V\setminus S']$ is a forest due to the fact that $S'$ is a feedback vertex set. 
We now prove the following claim.

\medskip
\noindent
{\em Claim 1: The forest $G-S'$ has at most $4s^2$ vertices of degree at least~$3$ in $G-S'$.}

\smallskip
\noindent
To prove Claim~1, we first root each connected component of $G-S'$ at an arbitrary vertex. Let $T$ be the set of vertices in $G-S'$ that have degree at least~$3$, but have no descendant of degree at least~$3$. Each vertex $v\in T$, together with two of its descendants, induces a $P_3$ in $G$. Since $T$ forms an independent set in $G$, these $|T|$ copies of $P_3$ are mutually induced 
in $G$, that is, the union of the vertex sets of these $|T|$ copies induce a $|T|P_3$ in~$G$.
As $G$ is $sP_3$-free, this implies that $|T|\leq s-1$. The only other possible vertices in $G-S'$ that have degree at least~$3$ in~$G-S'$ are ancestors of vertices in $T$. Hence, any vertex of degree at least~$3$ in $G-S'$ lies on a path from a vertex in $T$ to the root of the corresponding connected component. 
There are at most $|T|\leq s-1$ such paths, each of which contains at most $4s-2$ vertices due to the fact that $G$ is $sP_3$-free and hence $P_{4s-1}$-free. We conclude that there are at most $(s-1)(4s-2)\leq 4s^2$ vertices of degree at least~$3$ in $G-S'$. This completes the proof of Claim~1.

\medskip
\noindent
Starting from the feedback vertex set $S'$, we now construct a connected feedback vertex set of $G$ as follows. First, we add to $S'$ all the vertices of $G-S'$ that have degree at least~$3$ in $G-S'$. By Claim~1, this increases the size of $S'$ by at most $4s^2$. After this step, the graph $G-S'$ is a linear forest. This linear forest cannot contain more than~$4s$ vertices of degree~$2$, as otherwise $G-S'$ would contain an induced subgraph isomorphic to $sP_3$. We now add to $S'$ all vertices of~$G-S'$ that have degree~$2$ in $G-S'$. This increases the size of $S'$ by at most~$4s$. After this step, 
we have that $$|S'|\leq |S|+4s^2-4s+4s^2+4s = |S|+8s^2.$$
Moreover, every connected component of the graph $G-S'$ is isomorphic to either $K_1$ or $K_2$. 
We partition the vertices of $G-S'$ into two (possibly empty) independent sets $U_1$ and $U_2$ as follows: $U_1$ contains exactly one vertex from each connected component of $G-S'$ that is isomorphic to $K_2$, and $U_2$ contains all other vertices of $G-S'$. 
Observe that every vertex of $U_1$ has at most one neighbour in $U_2$ and vice versa.

Now let $Z$ denote the vertex set of the connected component of $G[S']$ that contains~$Y$. 
Our goal is to show that we can reduce the number of connected components of $G[S'\setminus Z]$ to zero by adding vertices to $S'$ that connect those connected components to $G[Z]$, such that in the end $Z$ is a connected feedback vertex set. For this purpose we may also swap some vertices in $S'$ with vertices outside $S'$ as long as in the end we have added at most $f(s)$ vertices, where $f$ is some function that only depends on $s$.

As a first step in obtaining the above goal, we apply Lemma~\ref{l-theclaim} twice, once with respect to $U_1$ and once with respect to $U_2$.
As a result, the set $S'$ increases in size 
by at most $4s-4$ vertices; by Lemma~\ref{l-theclaim} all these extra vertices went to~$Z$.
We now have that $$|S'|\leq |S|+8s^2+4s-4=|S|+ 8s^2+4s-4.$$
Let $U_1'\subseteq U_1$ and $U_2'\subseteq U_2$ be the vertices of $U_1$ and $U_2$, respectively, that did not get added to $Z$ and let ${\cal A}$ denote the set of all connected components of $G[S'\setminus Z]$. By the definition of $U_1$ and $U_2$ and Lemma~\ref{l-theclaim}, the following four properties hold:
\begin{itemize}
\item [1.] both $U'_1$ and $U'_2$ are independent sets;
\item [2.] every vertex  of $U'_1$ has at most one neighbour in $U'_2$ and vice versa;
\item [3.] for $i=1,2$, every vertex in $U'_i$ is adjacent to at most one connected component of ${\cal A}$ and vice versa.
\end{itemize}
 
Let $F$ be the graph induced by the union of all vertices in ${\cal A}$ and $U'_1\cup U'_2$, 
that is, let $F=G-Z$. 
Suppose $F$ has a connected component $X$ that contains an induced $P_3$. If $X$ contains vertices from more than one connected component of $G[U'_1\cup U'_2]$, then there is an induced path in $X$ containing all the vertices of all such connected components due to properties 1--3 mentioned earlier. This, together with the fact that $G$ (and hence~$X$) is $P_{4s-1}$-free, implies that $X$ contains at most $4s-2$ vertices of $U'_1\cup U'_2$. 
Moreover, as $G[Z]$ contains an induced $(s-1)P_3$ and $X$ contains an induced $P_3$, there is a vertex in $X$ that is adjacent to $Z$. We now add all the vertices of $X$ to $Z$, and we do the same with all the vertices of any other connected component of $F$ that contains an induced $P_3$; note that the total number of connected components in $F$ that contain an induced $P_3$ is at most $s-1$ due to the $sP_3$-freeness of $G$. Hence, the size of $S'$ is increased by at most $(4s-2)(s-1)=4s^2-6s+2$, so we now have that
$$|S'|\leq |S|+8s^2+4s-4+4s^2-6s+2=|S|+12s^2-2s-2$$ while the connectivity of $Z$ is maintained. For notational convenience, we use $U'_1$ and $U'_2$ to refer to the vertices of $U'_1$ and $U'_2$, respectively, that have not been added to $Z$ during this procedure.
Since $G$ is connected and there are no components in $F$ anymore that contain an induced $P_3$, we find that every $A\in {\cal A}$ is now 
adjacent to a unique connected component of $G[U'_1\cup U'_2]$, 
which we denote by $B_A$. 
Moreover, for each $B_A$ the following holds: $A$ is the unique connected component of ${\cal A}$ to which $B_A$ is adjacent, and due to this and the connectivity of $G$ we find that $B_A$ is adjacent to $Z$. 

In fact, because we got rid of all connected components of $F$ that contain an induced $P_3$, we have arrived at the situation where $G[V(A)\cup V(B_A)]$ is a complete graph for every connected component $A\in {\cal A}$. Let $A\in {\cal A}$, let $x\in V(A)$, and let~$y$ be a vertex of $B_A$ that is adjacent to $Z$. Since $N_G[x]\subseteq N_G[y]$, we find that the set $S'':=(S'\setminus \{x\})\cup \{y\}$ is a feedback vertex set of $G$, such that $|S''|=|S'|=|S|+12s^2-2s-2$ and the number of connected components in $G[S'']$ is one less than the number of connected components in $G[S']$. Therefore, by repeatedly swapping a vertex of a connected component $A\in {\cal A}$ with a vertex of $B_A$ that is adjacent to $Z$, we reduce the number of connected components of $G[S']$ to~$1$. As desired, this leads to a connected feedback vertex set of $G$ of size $|S|+12s^2-2s-2\leq \fvs(G)+12s^2-2s-2$, so we can take $c_s=12s^2-2s-2$. This completes the proof of  Lemma~\ref{l-sp3}.
\qed
\end{proof}

We are now ready to prove Theorem~\ref{t-tetrachotomy}, which we restate below.

\medskip
\noindent
{\bf Theorem~\ref{t-tetrachotomy}.}
{\it Let $H$ be a graph. Then it holds that 
\begin{enumerate}[(i)]
\item $\cfvs(G) = \fvs(G)$ for every connected $H$-free graph $G$ if and only if $H$ is an induced subgraph of $P_3$;
\item there exists a constant $c_H$ such that $\cfvs(G) \leq \fvs(G)+c_H$ for every connected $H$-free graph $G$ if and only if $H$ is an induced subgraph of $P_5+sP_1$ or $sP_3$ for some $s\geq 0$;
\item there exists a constant $c_H$ such that $\cfvs(G) \leq c_H \cdot \fvs(G)$ for every connected $H$-free graph $G$ if and only if $H$ is a linear forest.
\end{enumerate}
}

\begin{proof}
Statement (iii) follows immediately from Corollary~\ref{cor:H1-H2-free} by taking $H_1=H_2$ (as we already observed in see Section~\ref{s-intro}). 
Before proving~(i) we first prove (ii). 
Due to Lemmas~\ref{l-p5} and~\ref{l-sp3} we are left to show that if $H$ is not an induced subgraph of $P_5+sP_1$ or $sP_3$ for any integer $s$, then there is no constant $c_H$ such that $\cfvs(G)-\fvs(G)+c_H$ for every connected $H$-free graph~$G$. 

Let $H$ be a graph that is not an induced subgraph of $P_5+sP_1$ or $sP_3$ for any integer $s$. First suppose $H$ is not a linear forest. Then, by Theorem~\ref{t-tetrachotomy}~(iii) proven above, there does not exist a constant $c$ such that $\cfvs(G)\leq c\cdot \fvs(G)$ for every connected $H$-free graph $G$. This implies that there cannot exist a constant $c_H$ such that $\cfvs(G)\leq \fvs(G) + c_H$ for every connected $H$-free graph~$G$. 

Now suppose $H$ is a linear forest. Since $H$ is not an induced subgraph of $P_5+sP_1$ or $sP_3$ for any integer $s$, it contains $P_6$ or $P_4+P_2$ as an induced subgraph. Consequently, the class of $H$-free graphs is a superclass of the class of $(P_6,P_4+P_2)$-free graphs. Hence, it suffices to find some subclass ${\cal G}$ of $(P_6,P_4+P_2)$-free graphs for which there exists no constant $c_H$ such that $\cfvs(G)\leq \fvs(G) + c_H$ for every connected $G\in {\cal G}$. Below we describe such a class~${\cal G}$.

\begin{figure}
\begin{center}
\includegraphics[scale=1]{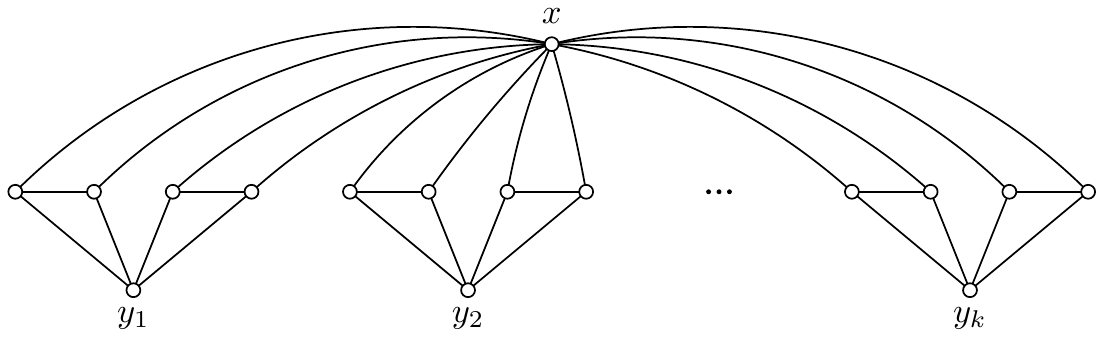}
\caption{The graph $L_k$, defined for every $k\geq 1$.}
\label{f-lk}
\end{center}
\end{figure}

The {\em hourglass} is the graph consisting of two triangles meeting in exactly one vertex. For every integer $k\geq 1$, let $L_k$ be the graph obtained from $k$ disjoint copies of the hourglass by adding a new vertex $x$ that is made adjacent to all vertices of degree~$2$; see Figure~\ref{f-lk} for an illustration. 
Note that $L_k$ is $(P_6,P_4+P_2)$-free for every $k\geq 1$.
For every $k\geq 1$, the unique minimum feedback vertex set in $L_k$ is the set $\{x,y_1,y_2,\ldots,y_k\}$, so $\fvs(L_k)=k+1$. Every minimum connected feedback vertex set in $L_k$ contains the set $\{x,y_1,y_2,\ldots,y_k\}$, as well as exactly one additional vertex for each of the vertices $y_i$ to make this set connected. Hence, $\cfvs(L_k)=2k+1=\fvs(L_k)+k$. Hence, the family $\{L_k\}$ is our desired example.

\medskip
\noindent
We are left to prove~(i).
Let $H$ be a graph. 
First suppose that $H$ is an induced subgraph of $P_3$.
Then $\cfvs(G)=\fvs(G)$ for every connected $H$-free graph $G$, as any such graph is a complete graph.

Now suppose $H$ is not an induced subgraph of $P_3$. 
If $H$ is not an induced subgraph of $P_5+sP_1$ or $sP_3$ for any integer~$s$ then there is no constant~$c_H$ such that $\cfvs(G)\leq \fvs(G) + c_H$ for every connected $H$-free graph $G$  due to Theorem~\ref{t-tetrachotomy}~(ii) proven above. Assume that $H$ is an induced subgraph of $P_5+sP_1$ or $sP_3$ for some integer $s$. 
As $H$ is not an induced subgraph of~$P_3$, it suffices to consider the cases $H=P_1+P_2$ or $H=3P_1$.
If $H=P_1+P_2$, then we let $G=K_{3,\ell}$ (the complete bipartite graph with partition classes of size $3$ and $\ell$, respectively) for some $\ell\geq 3$.
We observe that $G$ is an $H$-free graph with $\cfvs(G)=3$ and $\fvs(G)=2$.
If $H=3P_1$, then we let $G$ be the 6-vertex graph obtained from taking two non-adjacent vertices $u$ and $v$ that we both connect to all vertices of a $P_4$. 
We observe that $G$ is an $H$-free graph with $\cfvs(G)=3$ and $\fvs(G)=2$.
\qed
\end{proof}

\medskip
\noindent
{\it Acknowledgments.} We thank Tatiana Hartinger, Matthew Johnson and Martin Milani{\v c} for helpful comments.


\begin{thebibliography}{10}

\bibitem{BHKP13}
Belmonte, R., van 't Hof, P., Kami\'nski, M., Paulusma, D.:
The price of connectivity for feedback vertex set.
In: Eurocomb 2013, CRM Series 16 (2013) 123--128.

\bibitem{BHKP14}
Belmonte, R., van 't Hof, P., Kami\'nski, M., Paulusma, D.:
Forbidden induced subgraphs and the price of connectivity for feedback vertex set.
In: MFCS 2014, LNCS 8635 (2014) 57--68.

\bibitem{BT90}
Bacs\'{o}, G., Tuza, Zs.:
Dominating cliques in $P_5$-free graphs.
Period. Math. Hung. 21(1990) 303--308.

\bibitem{CCFS14}
Camby, E., Cardinal, J., Fiorini, S., Schaudt, O.:
The Price of Connectivity for Vertex Cover. Discrete Mathematics \& Theoretical Computer Science 16 (2014) 207--224.

\bibitem{CS14}
Camby, E. and Schaudt, O.:
The price of connectivity for dominating set: Upper bounds and complexity. Discrete Applied Mathematics 17 (2014) 53--59.

\bibitem{CL10}
Cardinal, J. and Levy, E.:
Connected vertex covers in dense graphs.
Theor. Comput. Sci. 411(2010) 2581--2590.

\bibitem{DM82}
Duchet, P. and Meyniel, H.:
On Hadwiger's number and the stability number.
Ann. Discrete Math. 13 (1982) 71--74.

\bibitem{GS09}
Grigoriev, A. and Sitters, R.:
Connected feedback vertex set in planar graphs.
In: WG 2009, LNCS  5911 (2010) 143--153.

\bibitem{HJMP15}
Hartinger, T.R., Johnson, M., Milani{\v c}, M. and Paulusma, D.: 
The price of connectivity for cycle transversals. In: MFCS 2015, LNCS 9235 (2015) 395--406.

\bibitem{SS10}
Schweitzer, P. and Schweitzer, P.:
Connecting face hitting sets in planar graphs.
Inf. Process. Lett. 111 (2010) 11--15.

\bibitem{Zve03}
Zverovich, I.E.:
Perfect connected-dominant graphs.
Discuss. Math. Graph Theory 23 (2003)159--162.
\end{thebibliography}
\end{document}